\documentclass[12pt, reqno]{amsart}
\usepackage{amsmath, amsthm, amscd, amsfonts, amssymb, graphicx, color}
\usepackage[bookmarksnumbered, colorlinks, plainpages]{hyperref}
\hypersetup{colorlinks=true,linkcolor=red, anchorcolor=green, citecolor=cyan, urlcolor=red, filecolor=magenta, pdftoolbar=true}
\newtheoremstyle{uprightplain}
{6pt}{6pt}
{\normalfont}
{}{\bfseries}{.}{ }{}
\theoremstyle{uprightplain}
\newtheorem{theorem}{Theorem}[section]
\newtheorem{lemma}[theorem]{Lemma}
\newtheorem{proposition}[theorem]{Proposition}
\newtheorem{cor}[theorem]{Corollary}
\theoremstyle{definition}

\newtheorem{question}[theorem]{Question}
\newtheorem{example}[theorem]{Example}
\newtheorem{remark}[theorem]{Remark}
\numberwithin{equation}{section}
\allowdisplaybreaks
\begin{document}
	
	\title [Weighted composition operators]{\Small{Weighted composition operators on weighted Dirichlet spaces: boundedness, compactness and spectral properties}}
	\author[A. Sen]{Anirban Sen}
	
	\address[Sen] {Mathematical Institute, Silesian University in Opava, Na Rybn\'{\i}\v{c}ku 1, 74601 Opava, Czech Republic}
	\email{anirbansenfulia@gmail.com; Anirban.Sen@math.slu.cz}
	
	
	\subjclass[2020]{Primary: 47B38, 47B91; Secondary: 47B33, 30H05}
	
	\keywords{Weighted composition operator, weighted Dirichlet spaces, spectrum}
	
	\maketitle	
	
	\begin{abstract} 
		We establish necessary and sufficient conditions for the boundedness and compactness of weighted composition operators acting on weighted Dirichlet spaces and determine the spectrum of a certain class of such operators. Our results extend earlier work on unweighted composition operators and highlight the close interplay between the operator theoretic behavior of weighted composition operators and the function theoretic properties of their inducing functions. Several examples are provided to illustrate the applicability of the obtained results.
	\end{abstract}
	
	\section{Introduction}
	Let $\text{Hol}(\mathbb{D})$ denote the space of complex-valued holomorphic functions on the open unit disc $\mathbb{D}=\{z \in \mathbb{C} : |z|<1\}.$
	For $\psi, \varphi \in \text{Hol}(\mathbb{D}),$ with $\varphi(\mathbb{D})\subseteq \mathbb{D},$ the weighted composition operator $C_{\psi,\varphi }$ on  $\text{Hol}(\mathbb{D})$ is defined by
	\[C_{\psi,\varphi }f=\psi f\circ \varphi,~~f \in \text{Hol}(\mathbb{D}).\]
	Here, $\psi$ and $\varphi$ are called the inducing functions of the operator.
	In particular, when $\psi\equiv 1,$ the operator $C_{\psi,\varphi }$ becomes the composition operator $C_{\varphi},$ and when $\varphi$ is identity map on $\mathbb D,$ the operator $C_{\psi,\varphi }$ coincides with the multiplication operator $M_{\psi}.$ 
	
	The theory of composition and weighted composition operators is well-developed, with a vast literature dedicated to understanding their behavior on different function spaces. Their boundedness and compactness on the Hardy-Hilbert space were studied in \cite{G_PAMS_2008, Shaprio_BOOK}, with corresponding results for Bergman and weighted Bergman spaces in \cite{CR_JLMS_2004, LL_JAMS_2022}. Related investigations on Besov-type spaces and the Dirichlet space appear in \cite{CGP_MA_2015, T_TAMS_2003}.

	In this article, we focus on the analysis of weighted composition operators on weighted Dirichlet spaces. For $\alpha \in \mathbb{R},$ weighted Dirichlet spaces $\mathcal{D}_{\alpha}=\mathcal{D}_{\alpha}(\mathbb{D})$ are defined by
	\[\mathcal{D}_{\alpha}=\left\{f \in \text{Hol}(\mathbb{D}) : f(z)=\sum_{n=0}^{\infty}a_nz^n, \sum_{n=0}^{\infty}(n+1)^{1-\alpha}|a_n|^2< \infty \right\}.\]

	For these spaces, necessary conditions for the boundedness and compactness of composition operators were established in \cite{MS_CJM_1986}, and a complete characterization was later provided in \cite{Z_PAMS_1998}. In contrast, weighted composition operators present a significantly more delicate situation. Although a characterization using Carleson measure was given in \cite{Kumar_2009}, these conditions are often difficult to verify explicitly.
	
	The spectral behavior of weighted composition operators is likewise strongly influenced by the inducing functions. General results on the spectral theory of composition operators can be found in \cite{AL_IJM_2004, CS_PAMS_1975, K_PJM_1979}, while results specific to Dirichlet and weighted Dirichlet spaces are discussed in \cite{CGP_MA_2015, GS_JFA_2016, H_ADM_1997}.

	In this article, we investigate how the properties of weighted composition operators on weighted Dirichlet spaces are governed by the function theoretic features of their inducing functions, with an emphasis on conditions that are easy to verify.
	Whereas the case $\alpha \geq 1$ is already well understood, we study the connection between the function theoretic properties of $\psi$ and $\varphi$ in order to characterize bounded and compact weighted composition operators $C_{\psi, \varphi}$ on $\mathcal D_{\alpha}$ for $\alpha \in (-1,1).$
	Since $C_{\psi,\varphi}$ reduces to the zero operator when $\psi \equiv 0,$ we exclude this case from consideration. By the closed graph theorem, a necessary condition for $C_{\psi,\varphi}$ to be bounded on $\mathcal D_{\alpha}$ is that $\psi$ and $\psi \varphi$ belongs to $\mathcal D_{\alpha}.$ We begin by establishing an additional necessary condition for the boundedness and compactness of weighted composition operators on $\mathcal D_{\alpha}$ for $\alpha \in (0,1),$ and demonstrate that these criteria are not sufficient for a complete characterization.
	Next, we establish sufficient conditions for the boundedness and compactness of weighted composition operators on $\mathcal D_{\alpha}$ for $\alpha \in (-1,1).$
	Our main results are presented in the following theorems.

	\begin{theorem}\label{TB1}
		Let $\varphi$ be a univalent holomorphic self-map of $\mathbb{D},$ and let $\psi \in \text{Hol}(\mathbb{D})$ satisfy
		\begin{align*}
			&(i)~~\sup_{z \in \mathbb{D}}|\psi''(z)|(1-|z|^2)<\infty,\\
			&(ii)~~\sup_{z \in \mathbb{D}}|\varphi'(z)\psi'(z)|(1-|z|^2)<\infty,\\
			&(iii)~~\sup_{z \in \mathbb{D}}|\varphi''(z)\psi(z)|(1-|z|^2)<\infty,\\
			&(iv)~~\sup_{z \in \mathbb{D}}|\varphi'(z)\psi(z)|\left(\frac{1-|z|^2}{1-|\varphi(z)|^2}\right)^{\frac{\alpha}{2}+1}<\infty,
		\end{align*}
		then $C_{\psi,\varphi }$ is bounded on $\mathcal{D}_{\alpha}.$
	\end{theorem}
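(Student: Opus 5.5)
The plan is to work with an equivalent norm on $\mathcal D_\alpha$ that involves second derivatives, which is what conditions (i)--(iv) are tailored to. For $\alpha>-1$, comparing Taylor coefficients gives
\[
\|f\|_{\mathcal D_\alpha}^2\asymp |f(0)|^2+|f'(0)|^2+\int_{\mathbb D}|f''(z)|^2(1-|z|^2)^{\alpha+2}\,dA(z),
\]
and likewise
\[
\|f\|_{\mathcal D_\alpha}^2\asymp |f(0)|^2+\int_{\mathbb D}|f'(z)|^2(1-|z|^2)^{\alpha}\,dA(z).
\]
The second form shows that $f\in\mathcal D_\alpha$ if and only if $f'\in A^2_\alpha$, the weighted Bergman space with weight $(1-|z|^2)^\alpha$. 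Since $(n+1)^{-1-\alpha}\le (n+1)^{1-\alpha}$, we also have $\mathcal D_\alpha\hookrightarrow A^2_\alpha$ continuously. By Littlewood's subordination principle, $C_\varphi$ is bounded on $A^2_\alpha$ for every holomorphic self-map $\varphi$; here univalence is not needed. The point-evaluation terms $|(\psi f\circ\varphi)(0)|$ and $|(\psi f\circ\varphi)'(0)|$ are bounded by $C\|f\|_{\mathcal D_\alpha}$, because point evaluations of $f$ and $f'$ are bounded functionals. So it suffices to bound the integral of $|(\psi f\circ\varphi)''|^2$.

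Next I expand
\[
(\psi\, f\circ\varphi)''=\psi''\, f\circ\varphi+(2\psi'\varphi'+\psi\varphi'')\, f'\circ\varphi+\psi\varphi'^2\, f''\circ\varphi
\]
and treat the three groups separately. For the first term, (i) gives $|\psi''|^2(1-|z|^2)^{\alpha+2}\le C(1-|z|^2)^{\alpha}$. Hence its contribution is at most $C\|f\circ\varphi\|_{A^2_\alpha}^2\le C'\|f\|_{A^2_\alpha}^2\le C''\|f\|_{\mathcal D_\alpha}^2$. For the middle terms, (ii) and (iii) similarly reduce the contribution to $C\|f'\circ\varphi\|_{A^2_\alpha}^2\le C'\|f'\|_{A^2_\alpha}^2\le C''\|f\|_{\mathcal D_\alpha}^2$, again by Littlewood subordination on $A^2_\alpha$.

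The last term is the main one, and it is where (iv) and univalence enter. I write
\[
|\psi|^2|\varphi'|^4|f''(\varphi)|^2(1-|z|^2)^{\alpha+2}=\Bigl[|\varphi'\psi|^2\Bigl(\tfrac{1-|z|^2}{1-|\varphi(z)|^2}\Bigr)^{\alpha+2}\Bigr]\,|f''(\varphi(z))|^2(1-|\varphi(z)|^2)^{\alpha+2}|\varphi'(z)|^2 .
\]
By (iv), the bracket is bounded by a constant $C$. Since $\varphi$ is univalent, the change of variables $w=\varphi(z)$, with Jacobian $|\varphi'|^2$, gives
\[
\int_{\mathbb D}(\cdots)\,dA\le C\int_{\varphi(\mathbb D)}|f''(w)|^2(1-|w|^2)^{\alpha+2}\,dA(w)\le C\|f\|_{\mathcal D_\alpha}^2 .
\]
Summing the pieces yields $\|C_{\psi,\varphi}f\|_{\mathcal D_\alpha}\le C\|f\|_{\mathcal D_\alpha}$.

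The step that needs the most care is the norm equivalences. One must check that the second-derivative integral, together with $|f(0)|^2+|f'(0)|^2$, is equivalent to the $\mathcal D_\alpha$ norm for all $\alpha\in(-1,1)$, and that $\mathcal D_\alpha\subset A^2_\alpha$. Both reduce to computing $\int_0^1 r^{2n+1}(1-r^2)^{\beta}\,dr\asymp (n+1)^{-1-\beta}$ for $\beta>-1$, and then comparing coefficient weights. The rest of the argument is routine once these equivalences are in place.
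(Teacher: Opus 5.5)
Your proposal is correct and follows essentially the same route as the paper. Both arguments use the equivalent norm $|f(0)|^2+|f'(0)|^2+\|f''\|^2_{A^2_{\alpha+2}}$ and expand $(\psi\, f\circ\varphi)''$; conditions (i)--(iii) together with boundedness of $C_\varphi$ on $A^2_\alpha$ handle the lower-order terms, and condition (iv) plus univalence (via $w=\varphi(z)$) handles the $f''\circ\varphi$ term. The only differences are cosmetic: you group the two $f'\circ\varphi$ terms, and you bound the point-evaluation terms directly rather than through $C_\varphi$ on $A^2_\alpha$.
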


	\begin{theorem}\label{TC1}
		Let $\varphi$ be a univalent holomorphic self-map of $\mathbb{D},$ and let $\psi \in \text{Hol}(\mathbb{D})$ satisfy
		\begin{align*}
			&(i)~~\lim_{|z|\to 1^-}|\psi''(z)|(1-|z|^2)=0,\\
			&(ii)~~\lim_{|z|\to 1^-}|\varphi'(z)\psi'(z)|(1-|z|^2)=0,\\
			&(iii)~~\lim_{|z|\to 1^-}|\varphi''(z)\psi(z)|(1-|z|^2)=0,\\
			&(iv)~~\lim_{|z|\to 1^-}|\varphi'(z)\psi(z)|\left(\frac{1-|z|^2}{1-|\varphi(z)|^2}\right)^{\frac{\alpha}{2}+1}=0,
		\end{align*}
		then $C_{\psi,\varphi }$ is compact on $\mathcal{D}_{\alpha}.$
	\end{theorem}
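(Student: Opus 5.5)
We prove Theorem \ref{TC1} by the usual sequential criterion: $C_{\psi,\varphi}$ is compact on $\mathcal D_\alpha$ if and only if $\|C_{\psi,\varphi}f_n\|_{\mathcal D_\alpha}\to 0$ for every bounded sequence $(f_n)$ in $\mathcal D_\alpha$ that converges to $0$ uniformly on compact subsets of $\mathbb D$. The norm we use is the equivalent one for $\alpha\in(-1,1)$,
\[
\|f\|^2 \asymp |f(0)|^2+|f'(0)|^2+\int_{\mathbb D}|f''(z)|^2(1-|z|^2)^{\alpha+2}\,dA(z),
\]
which passes to second derivatives because the weights $(1-|z|^2)^{\alpha}$ with $\alpha<1$ are too weak for a first-derivative formula to absorb the terms produced by $\psi$. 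The conditions (i)--(iv) are exactly of second-derivative type, so this is the natural norm to work in. Theorem \ref{TB1} gives boundedness from the big-$O$ versions of these hypotheses, and we redo its estimates with little-$o$ in place of big-$O$. The point evaluations at $0$ of $C_{\psi,\varphi}f_n$ and of its first derivative tend to $0$ by local uniform convergence, so only the integral term needs attention.

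Differentiating twice gives
\[
(\psi\, f\circ\varphi)''=\psi''\, f\circ\varphi+\bigl(2\psi'\varphi'+\psi\varphi''\bigr) f'\circ\varphi+\psi\varphi'^2\, f''\circ\varphi .
\]
We split the integral of each term into the parts over $|z|\le r$ and $r<|z|<1$. The part over $|z|\le r$ tends to $0$ for fixed $r$, since $f_n\circ\varphi$ and its derivatives go to $0$ uniformly on $\{|z|\le r\}$. On $\{r<|z|<1\}$ we estimate each term as follows.

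\emph{Term with $\psi''$.} By (i), $|\psi''|^2(1-|z|^2)^{\alpha+2}\le \varepsilon^2(1-|z|^2)^{\alpha}$ there. So we need $\int|f_n\circ\varphi|^2(1-|z|^2)^\alpha\,dA$ to be bounded. For $\alpha>-1$, $\mathcal D_\alpha$ embeds in the weighted Bergman space $A^2_\alpha$, and composition operators are bounded on $A^2_\alpha$ by Littlewood subordination. This bounds $\|f_n\circ\varphi\|_{A^2_\alpha}\lesssim\|f_n\|$, which handles this term.

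\emph{Terms with $f'\circ\varphi$.} By (ii) and (iii), the coefficient satisfies $|2\psi'\varphi'+\psi\varphi''|^2(1-|z|^2)^{\alpha+2}\le\varepsilon^2(1-|z|^2)^\alpha$. We then need $\int|f_n'\circ\varphi|^2(1-|z|^2)^\alpha\,dA\lesssim\|f_n\|^2$, i.e.\ we need $C_\varphi$ to map the derivative space $\{f'\}$, which sits in $A^2_{\alpha+2}$-type spaces, into $A^2_\alpha$. Here we expect to follow the corresponding step of Theorem \ref{TB1}. If that is not available, we reduce as follows: $\varphi'\cdot(f'\circ\varphi)=(f\circ\varphi)'$, and $C_\varphi$ is bounded on $\mathcal D_\alpha$ for univalent $\varphi$, since (iv) with $\psi$ bounded away from zero is not given, but Zorboska's characterization \cite{Z_PAMS_1998} applies. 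The mismatch between $f'\circ\varphi$ and $(f\circ\varphi)'$ is the delicate point, and we would handle it the same way the paper does in Theorem \ref{TB1}.

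\emph{Main term.} Since $\varphi$ is univalent, the change of variables $w=\varphi(z)$ gives
\[
\int_{r<|z|<1}|\psi\varphi'^2|^2|f_n''\circ\varphi|^2(1-|z|^2)^{\alpha+2}\,dA
\le \sup_{|z|>r}\Bigl(|\psi\varphi'|^2\Bigl(\tfrac{1-|z|^2}{1-|\varphi(z)|^2}\Bigr)^{\alpha+2}\Bigr)\int_{\mathbb D}|f_n''(w)|^2(1-|w|^2)^{\alpha+2}\,dA(w),
\]
which is at most $\varepsilon^2\sup_n\|f_n\|^2$ by (iv).

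Letting $n\to\infty$ first and then $r\to1$ gives $\|C_{\psi,\varphi}f_n\|\to0$, so $C_{\psi,\varphi}$ is compact. The main obstacle is the middle term: it requires a uniform weighted estimate for $f'\circ\varphi$ that is not directly controlled by (iv), and we expect to reuse the argument from Theorem \ref{TB1} for it.
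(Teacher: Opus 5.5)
Your route is the paper's: the sequential criterion (with boundedness supplied by Theorem~\ref{TB1}), the second-derivative norm, and the expansion of $(\psi f\circ\varphi)''$. You also split at $|z|=r$, use the little-$o$ hypotheses on the annulus, and handle the $f''\circ\varphi$ term by the univalent change of variables and (iv). Those parts are fine. The gap is the $f'\circ\varphi$ terms, which you leave open and misdiagnose. It is $f''$, not $f'$, that naturally lives in $A^2_{\alpha+2}$. For $\alpha\in(-1,1)$ one has $\|f\|^2_{\mathcal D_\alpha}\cong |f(0)|^2+\int_{\mathbb D}|f'|^2\,dA_\alpha$. Hence $f\in\mathcal D_\alpha$ if and only if $f'\in A^2_\alpha$, with $\|f'\|_{A^2_\alpha}\lesssim\|f\|_{\mathcal D_\alpha}$. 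So $(f_n')$ is bounded in $A^2_\alpha$, and the same subordination step you already used for the $\psi''$ term gives
\[
\int_{\mathbb D}|f_n'(\varphi(z))|^2\,dA_\alpha(z)=\|C_\varphi f_n'\|^2_{A^2_\alpha}\lesssim\|f_n'\|^2_{A^2_\alpha}\lesssim\|f_n\|^2_{\mathcal D_\alpha}.
\]
Together with (ii) and (iii) on $r<|z|<1$, this bounds the middle terms by a constant times $\varepsilon^2\sup_n\|f_n\|^2_{\mathcal D_\alpha}$. This is exactly the estimate the paper uses in the proofs of both Theorem~\ref{TB1} and Theorem~\ref{TC1}. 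There is no mismatch between $f'\circ\varphi$ and $(f\circ\varphi)'$ to resolve.

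The fallback you sketch would not close the step. The identity $\varphi'\,(f'\circ\varphi)=(f\circ\varphi)'$ does nothing for the term $\psi\varphi''\,(f'\circ\varphi)$, which carries no factor $\varphi'$. Moreover, boundedness of $C_\varphi$ on $\mathcal D_\alpha$ is not among the hypotheses. For $\alpha<0$ it can fail even for univalent $\varphi$: then $\mathcal D_\alpha$ consists of functions continuous on $\overline{\mathbb D}$, while $\varphi=C_\varphi(z)$ need not be, for instance a Riemann map onto a subdomain with non-locally-connected boundary. So Zorboska's characterization cannot be invoked for free. With the $A^2_\alpha$ estimate above inserted, your argument is complete and coincides with the paper's.
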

	
	We remark that these conditions are fairly simple and effective in verifying that a large class of inducing functions $\psi$ and $\varphi$ generate bounded and compact weighted composition operators, extending beyond the case of unweighted composition operators.
	
	Building on the necessary conditions obtained earlier, we present a precise characterization of when the weighted composition operator 
	$C_{\psi,\varphi}$ is bounded or compact on $\mathcal{D}_{\alpha},$ $\alpha \in (0,1)$ for certain specific choices of the inducing functions $\psi$ and $\varphi.$
	The following theorem presents this characterization.
	
	\begin{theorem}\label{mainp1}
		Let $\varphi$ be a univalent holomorphic self-map of $\mathbb{D}$ with $\varphi''\in \text{H}^{\infty}(\mathbb{D})$ and let $\psi \in \text{Hol}(\mathbb{D}).$\\
		$(i)$~~Suppose $\sup\limits_{z \in \mathbb{D}}|\psi''(z)|(1-|z|^2)<\infty.$
		Then 
		$C_{\psi,\varphi}$ is bounded on $\mathcal{D}_{\alpha}$ if and only if
		$$\sup_{z \in \mathbb{D}}|\psi(z)|\left(\frac{1-|z|^2}{1-|\varphi(z)|^2}\right)^{\frac{\alpha}{2}}<\infty.$$
		$(ii)$~~Suppose $\lim\limits_{|z|\to 1^-}|\psi''(z)|(1-|z|^2)=0.$ Then 
		$C_{\psi,\varphi}$ is compact on $\mathcal{D}_{\alpha}$ if and only if
		$$\lim_{|z|\to 1^-}|\psi(z)|\left(\frac{1-|z|^2}{1-|\varphi(z)|^2}\right)^{\frac{\alpha}{2}}=0.$$
	\end{theorem}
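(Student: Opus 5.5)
Both directions of each part rest on two tools. For sufficiency we use Theorems \ref{TB1} and \ref{TC1}. For necessity we use the necessary condition mentioned in the introduction, which is proved before this point for $\alpha\in(0,1)$. Presumably it says that boundedness of $C_{\psi,\varphi}$ on $\mathcal D_\alpha$ forces
\[
\sup_{z}|\psi(z)|\Big(\tfrac{1-|z|^2}{1-|\varphi(z)|^2}\Big)^{\alpha/2}<\infty,
\]
and that compactness forces the corresponding little-o condition. I would prove it by testing $C_{\psi,\varphi}$ against normalized reproducing kernels. For $\alpha\in(0,1)$ the kernel of $\mathcal D_\alpha$ satisfies $\|K_w\|^2\asymp (1-|w|^2)^{-\alpha}$. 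Using $C_{\psi,\varphi}^*K_z=\overline{\psi(z)}K_{\varphi(z)}$, we get
\[
|\psi(z)|\,\|K_{\varphi(z)}\|/\|K_z\|\le \|C_{\psi,\varphi}\|,
\]
which gives the necessity in (i). In (ii), $K_z/\|K_z\|\to0$ weakly as $|z|\to1$, and compactness of $C^*_{\psi,\varphi}$ gives the little-o version. So the "only if" parts reduce to this earlier estimate.

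For sufficiency in (i) I would verify hypotheses (i)--(iv) of Theorem \ref{TB1}. Hypothesis (i) is assumed. For (iii): the Bloch-type bound on $\psi''$ gives that $\psi'$ is in the Bloch space, hence $|\psi'(z)|\lesssim\log\frac{e}{1-|z|}$. Integrating once more shows $\psi$ is bounded. Since $\varphi''\in H^\infty$, we get $|\varphi''\psi|(1-|z|^2)$ bounded. For (ii): $\varphi''\in H^\infty$ makes $\varphi'$ bounded, so
\[
|\varphi'\psi'|(1-|z|^2)\lesssim(1-|z|)\log\tfrac{e}{1-|z|},
\]
which is bounded and even tends to $0$.

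The main obstacle is (iv): we must pass from the assumed exponent $\alpha/2$ to the exponent $\alpha/2+1$ with the extra factor $|\varphi'|$. We write
\[
|\varphi'\psi|\Big(\tfrac{1-|z|^2}{1-|\varphi|^2}\Big)^{\alpha/2+1}=|\psi|\Big(\tfrac{1-|z|^2}{1-|\varphi|^2}\Big)^{\alpha/2}\cdot\frac{|\varphi'(z)|(1-|z|^2)}{1-|\varphi(z)|^2}.
\]
By the Schwarz--Pick lemma the last factor is at most $1$, so (iv) follows from the hypothesis.

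Part (ii) runs in the same way with Theorem \ref{TC1}. Under the little-o assumption on $\psi''$, the function $\psi'$ lies in the little Bloch space, so $|\psi'(z)|=o(\log\frac1{1-|z|})$. Hence $|\varphi'\psi'|(1-|z|^2)\to0$, which is condition (ii). Condition (iv) follows from the same Schwarz--Pick factorization together with the assumed limit.

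The delicate point is condition (iii), $|\varphi''\psi|(1-|z|^2)\to0$. Boundedness of $\psi$ and $\varphi''$ only gives $O(1-|z|)$, and that tends to $0$ anyway, so this is fine. It remains to check that $\psi$ is bounded; this follows since $\psi'$ Bloch implies $\psi'\in H^2$-type growth integrable along radii, so $\psi$ extends continuously to $\overline{\mathbb D}$.
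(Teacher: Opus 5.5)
Your proposal is correct and follows essentially the same route as the paper. Necessity comes from the normalized-kernel estimate of Proposition~\ref{TB0}, and sufficiency comes from checking the hypotheses of Theorems~\ref{TB1} and~\ref{TC1}: conditions (i)--(iii) follow from the logarithmic growth of the Bloch function $\psi'$ (the content of Lemma~\ref{Englis_p3}), and condition (iv) follows from Schwarz--Pick. The only cosmetic difference is in (iv): you absorb the extra factor directly via $|\varphi'(z)|(1-|z|^2)\le 1-|\varphi(z)|^2$, whereas the paper uses $\frac{1-|z|}{1-|\varphi(z)|}\le\frac{1+|\varphi(0)|}{1-|\varphi(0)|}$ together with the boundedness of $\varphi'$.
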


	As a consequence of this result, we prove a weighted version of the Comparison Theorem, which generalizes the unweighted version given in \cite{MS_CJM_1986}.

	
	Finally, under suitable function theoretic assumptions on the inducing functions $\psi$ and $\varphi$, we obtain the following complete characterization of the spectrum $\sigma(C_{\psi,\varphi};\mathcal{D}_{\alpha})$ for $\alpha \in (-1,1).$

	\begin{theorem}\label{SPMT1}
		Let $\varphi$ be a univalent holomorphic self-map of $\mathbb{D}$ with a fixed point $a \in \mathbb D,$ and satisfying $\varphi'' \in \text{H}^{\infty}(\mathbb D).$
		Let $\psi \in \text{Hol}(\mathbb{D})$ be such that:
		\begin{align*}
			&(i)~~\lim_{|z|\to 1^-}|\psi''(z)|(1-|z|^2)=0,\\
			&(ii)~~\lim_{|z|\to 1^-}|\psi(z)|\left(\frac{1-|z|^2}{1-|\varphi(z)|^2}\right)^{\frac{\alpha}{2}+1}=0.
		\end{align*}
		Then $0<|\varphi'(a)|<1$ and $\sigma(C_{\psi,\varphi};\mathcal{D}_{\alpha})=\left\{\psi(a)(\varphi'(a))^n: n \in \mathbb N \cup \{0\} \right\}\cup \{0\}.$\\
		Furthermore, if $\psi(a)=0$ then $\sigma(C_{\psi,\varphi};\mathcal{D}_{\alpha})=\{0\}.$
	\end{theorem}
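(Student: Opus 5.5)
The plan is to reduce to Theorem \ref{TC1}, so that $C_{\psi,\varphi}$ is compact on $\mathcal D_\alpha$, and then run the classical Riesz-theory argument for compact weighted composition operators with an interior fixed point (in the spirit of Kamowitz and Cowen--MacCluer).

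\textbf{Compactness.} Condition (i) of Theorem \ref{TC1} is hypothesis (i). For condition (iii), note that $\varphi''\in H^\infty$, so $|\varphi''\psi|(1-|z|^2)\le \|\varphi''\|_\infty |\psi|(1-|z|^2)$. Since $(1-|\varphi|^2)^{-1}\ge 1$ and $\frac{\alpha}{2}+1>0$, hypothesis (ii) gives $|\psi(z)|(1-|z|^2)^{\frac{\alpha}{2}+1}\to0$. Hence $|\psi(z)|(1-|z|^2)\to 0$, because $(1-|z|^2)^{\frac{\alpha}{2}}$ is bounded when $\alpha\ge0$ and tends to $\infty$ when $\alpha<0$.

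Also $\varphi'\in H^\infty$, being an integral of a bounded function. With this, condition (iv) follows directly from (ii). For condition (ii) we need $|\psi'(z)|(1-|z|^2)\to0$. Integrating hypothesis (i) gives $|\psi'(z)|=o(\log\frac{1}{1-|z|})$, which is too weak. Instead I would argue as follows: $\psi$ grows at most like $(1-|z|)^{-\alpha/2-1}$ from (ii), while (i) makes $\psi$ itself little-Zygmund-type, hence bounded and continuous up to the boundary. Then $|\psi'|(1-|z|^2)\to0$ follows from the little-Bloch property of $\psi$, which the little Zygmund condition (i) implies via standard results.

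Having checked (i)--(iv), Theorem \ref{TC1} gives that $C_{\psi,\varphi}$ is compact.

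\textbf{The value of $\varphi'(a)$.} The bound $|\varphi'(a)|\le 1$ is Schwarz's lemma. Equality would make $\varphi$ an automorphism. In that case $1-|\varphi(z)|^2\asymp 1-|z|^2$, and then (ii) forces $\psi(z)\to0$ at the boundary, so $\psi\equiv0$; this case is excluded. Univalence gives $\varphi'(a)\neq0$. Hence $0<|\varphi'(a)|<1$.

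\textbf{The spectrum.} Since the operator is compact and $\mathcal D_\alpha$ is infinite-dimensional, $0\in\sigma$, and every nonzero point of $\sigma$ is an eigenvalue.

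Let $\lambda\neq0$ be an eigenvalue with eigenfunction $f\neq0$, and write $f=(z-a)^k g$ with $g(a)\ne0$. Comparing lowest-order terms at $a$ in $\psi\, f\circ\varphi=\lambda f$ gives $\psi(a)\varphi'(a)^k=\lambda$. This shows $\sigma\subseteq\{\psi(a)\varphi'(a)^n\}\cup\{0\}$. In particular, if $\psi(a)=0$ there are no nonzero eigenvalues, so $\sigma=\{0\}$.

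For the reverse inclusion with $\psi(a)\ne0$, I use the invariant flag $M_n=\{f: f^{(j)}(a)=0,\ j<n\}$. The subspace $\mathcal D_\alpha/M_{n+1}$ is finite-dimensional. On it, $C_{\psi,\varphi}$ acts as a triangular matrix with diagonal entries $\psi(a)\varphi'(a)^j$, $j=0,\dots,n$. A quotient eigenvalue $\mu$ of a compact operator satisfies $\mu\in\sigma(T)\cup\sigma(T|_{M_{n+1}})$. The nonzero spectrum of the restriction lies in $\{\psi(a)\varphi'(a)^j: j\ge n+1\}$ by the previous paragraph. These values are distinct from $\psi(a)\varphi'(a)^n$ because $0<|\varphi'(a)|<1$. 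Therefore $\psi(a)\varphi'(a)^n\in\sigma$.

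\textbf{Main obstacle.} The hardest step is verifying condition (ii) of Theorem \ref{TC1}, namely $|\varphi'\psi'|(1-|z|^2)\to0$, from the given hypotheses. If the route above fails, I would bound $|\psi'|$ by Cauchy estimates on discs of radius comparable to $1-|z|$, using the growth of $\psi$ from (ii). This yields only $|\psi'|(1-|z|^2)=o\big((1-|z|)^{-\alpha/2-1}\big)$, which is insufficient by itself. So I would combine it with the $o(\log)$ bound on $\psi'$ coming from (i).
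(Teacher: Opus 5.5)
The gap is in your verification of the hypotheses of Theorem~\ref{TC1}. For condition (iii), you pass from $|\psi(z)|(1-|z|^2)^{\frac{\alpha}{2}+1}\to 0$ to $|\psi(z)|(1-|z|^2)\to 0$. But $|\psi(z)|(1-|z|^2)=|\psi(z)|(1-|z|^2)^{\frac{\alpha}{2}+1}(1-|z|^2)^{-\frac{\alpha}{2}}$, and for $\alpha\in(0,1)$ the last factor tends to $\infty$. Boundedness of $(1-|z|^2)^{\frac{\alpha}{2}}$ goes in the wrong direction, so your inference is valid only for $\alpha\le 0$. Hypothesis (ii) cannot do better by itself: when $\|\varphi\|_\infty<1$ it says no more than $|\psi(z)|(1-|z|^2)^{\frac{\alpha}{2}+1}\to 0$. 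So hypothesis (i) must be used here.

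For condition (ii) you have the opposite problem. You dismiss the bound $|\psi'(z)|=o\bigl(\log\frac{1}{1-|z|}\bigr)$, obtained by integrating (i), as too weak. In fact it is exactly what is needed, since $(1-|z|^2)\log\frac{1}{1-|z|}\to 0$. The ``standard result'' you then invoke (little Zygmund implies little Bloch) is nothing more than this integration. So your ``main obstacle'' is not an obstacle, and the Cauchy-estimate fallback is unnecessary.

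One observation repairs both points, namely Lemma~\ref{Englis_p3}. Applied to $\psi'$, it gives $|\psi'(z)|\lesssim 1+\log\frac{1}{1-|z|^2}$, hence $|\psi'(z)|(1-|z|^2)\to 0$. Applied once more to $\psi$, it gives $|\psi(z)|(1-|z|^2)\to 0$; alternatively, integrate the logarithmic bound radially to get $\psi\in H^\infty$, which you noted yourself but did not use for (iii). Together with $\varphi',\varphi''\in H^\infty$, this yields conditions (ii) and (iii) of Theorem~\ref{TC1}. This is how the paper argues in the proof of Theorem~\ref{mainp1}.

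Once compactness is secured, the rest of your argument is correct. Your direct treatment of the case $|\varphi'(a)|=1$ from hypothesis (ii) is valid for every $\alpha\in(-1,1)$; the paper instead cites Corollary~\ref{C0C1}, which is stated only for $\alpha\in(0,1)$. Your lowest-order comparison at $a$ is the paper's Lemma~\ref{SPMSL1}.

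For the inclusion of $\{\psi(a)\varphi'(a)^n\}$ in the spectrum, you take a genuinely different route from Proposition~\ref{SPT1}. The paper conjugates by $C_{\varphi_a}$ to move the fixed point to $0$. It then uses Lemma~\ref{SPL2} to see that $C^*_{\psi,\varphi}$ leaves $\mathrm{span}\{e_0,\dots,e_{n-1}\}$ invariant, and reads off eigenvalues of $C^*_{\psi,\varphi}$ from a triangular matrix. That argument needs neither compactness nor $|\varphi'(a)|<1$.

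You work directly at $a$ with the flag $M_n$, whose invariance is immediate. You then use the fact that the spectrum of the quotient operator on $\mathcal D_\alpha/M_{n+1}$ lies in $\sigma(C_{\psi,\varphi})\cup\sigma(C_{\psi,\varphi}|_{M_{n+1}})$. This avoids the conjugation and Lemma~\ref{SPL2}. The cost is that you need compactness and $0<|\varphi'(a)|<1$ to separate $\psi(a)\varphi'(a)^n$ from the spectrum of the restriction; both are available in this theorem.
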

	
	The paper is structured as follows. In Section \ref{S0}, we recall definitions and present preliminary results on weighted Dirichlet spaces. Section \ref{S1} is devoted to establishing necessary and sufficient conditions for the boundedness and compactness of $C_{\psi,\varphi}$, which depend on the interaction between $\psi$ and $\varphi$. Finally, in Section \ref{S2}, we examine the spectrum of $C_{\psi,\varphi}$ for specific choices of $\psi$ and $\varphi$.

	\section{Preliminaries}\label{S0}
	
	\noindent 
	Weighted Dirichlet spaces $\mathcal D_{\alpha}$ can be viewed as weighted Hardy spaces $H^2(\beta)$ with weight sequence $\beta(n)=(n+1)^{\frac{1-\alpha}{2}}.$ In particular, $\mathcal{D}_{1}$ coincides with the classical Hardy-Hilbert space $H^2(\mathbb{D}),$ $\mathcal{D}_{2}$ is the classical Bergman space $A^2(\mathbb{D})$ and $\mathcal{D}_{0}$ coincides with the classical Dirichlet space $\mathcal{D}.$ Each $\mathcal{D}_{\alpha}$ is a separable Hilbert space with inner product 
	\[\langle f,g \rangle_{\mathcal{D}_{\alpha}}=\sum_{n=0}^{\infty}(n+1)^{1-\alpha}a_n\overline{b}_n,\]
	where $f(z)=\sum\limits_{n=0}^{\infty}a_nz^n$ and $g(z)=\sum\limits_{n=0}^{\infty}b_nz^n.$ Moreover, these spaces are reproducing kernel Hilbert spaces, with reproducing kernel at $w \in \mathbb D$ given by
	\begin{align}\label{repker}
		k^{\alpha}_w(z)=
		\sum_{n=0}^{\infty}\frac{(\overline{w}z)^n}{(n+1)^{1-\alpha}},~~z \in \mathbb D.
	\end{align}
	The normalized Lebesgue area measure on $\mathbb{D}$ is denoted by $dA,$ and for $\alpha>-1$ the finite measure $dA_{\alpha}$ on $\mathbb{D}$ is given by 
	\[dA_{\alpha}(z)=(1-|z|^2)^{\alpha}dA(z).\]
	By using Stirling's formula for $f \in \mathcal{D}_{\alpha}$ it follows that 
	\begin{align}
		\|f\|^2_{\mathcal{D}_{\alpha}} \cong 
		\begin{cases}
			|f(0)|^2+\int_{\mathbb{D}}|f'(z)|^2dA_{\alpha}(z)\,\,\text{if}\,\, \alpha \in (-1,1),\\
			\int_{\mathbb{D}}|f(z)|^2dA_{\alpha-2}(z)\,\,\text{if}\,\,\alpha>1.
		\end{cases}
	\end{align}
	Throughout this paper, we write $A \lesssim B$ to mean $A \leq CB$ for an implicit constant $C>0,$ and $A \cong B$ means both $A \lesssim B$ and $B \lesssim A$ hold.\\
	Recall that for $\alpha>-1$ the weighted Bergman spaces $A_{\alpha}^2=A_{\alpha}^2(\mathbb{D})$ are defined by
	\[A^2_{\alpha}=\left\{f \in \text{Hol}(\mathbb{D}) : \int_{\mathbb{D}}|f(z)|^2dA_{\alpha}(z)<\infty \right\}.\]
	Again Stirling's formula shows that if $f \in A^2_{\alpha}$ with $f(z)=\sum_{n=0}^{\infty}a_nz^n$ then 
	\begin{align*}
		\|f\|^2_{A^2_{\alpha}}\cong \sum\limits_{n=0}^{\infty}(n+1)^{-1-\alpha}|a_n|^2.
	\end{align*}
	We therefore arrive at the following relations, which will play an important role in our subsequent proofs.
	\begin{align*}
		&(i)~~\mathcal{D}_{\alpha+2}=A^2_{\alpha}~~\text{for all}~~\alpha>-1.\\
		&(ii)~~\text{Let $f \in Hol(\mathbb D)$ and $\alpha>-1$. Then}~~ f \in \mathcal{D}_{\alpha}~~\text {if and only if}~~f' \in A^2_{\alpha}.\\
		&(iii)~~\text{The continuous inclusion}~~\mathcal{D}_{\alpha} \subseteq A^2_{\alpha}~~\text{holds for all}~~\alpha>-1, i.e., \|f\|_{A^2_{\alpha}} \lesssim \|f\|_{\mathcal{D}_{\alpha}}\\
		&~~\text{for all}~~f \in \mathcal{D}_{\alpha}.\\
		&(iv)~~\|f\|^2_{A^2_{\alpha}}\cong |f(0)|^2+\|f'\|^2_{A^2_{\alpha+2}}~~\text{for all}~~\alpha>-1.\\
		&(v)~~\|f\|^2_{\mathcal{D}_{\alpha}} \cong |f(0)|^2+|f'(0)|^2+\|f''\|^2_{A^2_{\alpha+2}}~~\text{for all}~~f \in \mathcal{D}_{\alpha}~~\text{and}~~\alpha \in (-1,1).
	\end{align*}
	For further details on weighted Dirichlet spaces, we refer to \cite{Cowen_BOOK, MS_CJM_1986}.

	\section{Boundedness and compactness}\label{S1}
	
	We first establish a necessary condition for the boundedness and compactness of $C_{\psi,\varphi}$ on $\mathcal D_{\alpha}$ for $\alpha \in (0,1).$
	
	
	

	\begin{proposition}\label{TB0}
		$(i)$~~If $C_{\psi,\varphi }$ is bounded on $\mathcal{D}_{\alpha}$ then 
		$$\sup_{z \in \mathbb{D}}|\psi(z)|\left(\frac{1-|z|^2}{1-|\varphi(z)|^2}\right)^{\frac{\alpha}{2}}<\infty.$$
		$(ii)$~~If $C_{\psi,\varphi }$ is compact on $\mathcal{D}_{\alpha}$ then 
		$$\lim_{|z|\to 1^-}|\psi(z)|\left(\frac{1-|z|^2}{1-|\varphi(z)|^2}\right)^{\frac{\alpha}{2}}=0.$$
	\end{proposition}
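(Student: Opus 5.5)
The approach is the standard reproducing kernel argument via the adjoint. For $\alpha\in(0,1)$ and any weighted composition operator,
\[
C_{\psi,\varphi}^{*}k^{\alpha}_w=\overline{\psi(w)}\,k^{\alpha}_{\varphi(w)},\qquad w\in\mathbb D.
\]
This follows from $\langle f, C_{\psi,\varphi}^{*}k^{\alpha}_w\rangle=\psi(w)f(\varphi(w))=\langle f,\overline{\psi(w)}k^{\alpha}_{\varphi(w)}\rangle$. Consequently
\[
|\psi(w)|\,\|k^{\alpha}_{\varphi(w)}\|_{\mathcal D_\alpha}\le \|C_{\psi,\varphi}\|\,\|k^{\alpha}_w\|_{\mathcal D_\alpha}.
\]
It therefore suffices to know the size of the kernel norms.

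From \eqref{repker}, $\|k^{\alpha}_w\|^2=k^{\alpha}_w(w)=\sum_{n\ge0}(n+1)^{\alpha-1}|w|^{2n}$. For $\alpha\in(0,1)$ I would compare this with $\sum_n \frac{\Gamma(n+\alpha)}{\Gamma(\alpha)n!}|w|^{2n}=(1-|w|^2)^{-\alpha}$, whose coefficients are $\cong (n+1)^{\alpha-1}$ by Stirling's formula. This gives
\[
\|k^{\alpha}_w\|^2\cong (1-|w|^2)^{-\alpha},
\]
uniformly in $w\in\mathbb D$, with two-sided constants. This is exactly where the restriction $\alpha>0$ enters: for $\alpha\le 0$ the kernel norm no longer has this power behaviour.

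For (i), I plug the kernel estimate into the inequality above and obtain
\[
|\psi(w)|(1-|\varphi(w)|^2)^{-\alpha/2}\lesssim \|C_{\psi,\varphi}\|\,(1-|w|^2)^{-\alpha/2},
\]
which is the claimed supremum bound.

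For (ii), I use the normalized kernels $K_w=k^{\alpha}_w/\|k^{\alpha}_w\|$. These converge weakly to $0$ as $|w|\to1^-$: they are bounded, and for polynomials $p$,
\[
|\langle p,K_w\rangle|=|p(w)|(1-|w|^2)^{\alpha/2}\to0.
\]
Polynomials are dense and $\{K_w\}$ is bounded, so weak convergence follows. The adjoint of a compact operator is compact, hence $\|C_{\psi,\varphi}^{*}K_w\|\to0$. Moreover
\[
\|C_{\psi,\varphi}^{*}K_w\|=|\psi(w)|\,\frac{\|k^{\alpha}_{\varphi(w)}\|}{\|k^{\alpha}_w\|}\cong|\psi(w)|\left(\frac{1-|w|^2}{1-|\varphi(w)|^2}\right)^{\alpha/2},
\]
which yields the limit statement.

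The main (mild) obstacle is the two-sided kernel norm estimate. Once it is established, both parts are immediate.
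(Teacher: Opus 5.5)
Your proposal is correct and follows essentially the same route as the paper. Both use the adjoint identity $C_{\psi,\varphi}^{*}k^{\alpha}_w=\overline{\psi(w)}\,k^{\alpha}_{\varphi(w)}$, the two-sided estimate $\|k^{\alpha}_w\|_{\mathcal D_\alpha}\cong(1-|w|^2)^{-\alpha/2}$ obtained from the binomial series and Stirling's formula, and, for part $(ii)$, weak nullity of the normalized kernels together with compactness of the adjoint. The only difference is that you prove $K_w\to 0$ weakly directly, using density of polynomials and boundedness of $\{K_w\}$, whereas the paper cites Cowen--MacCluer for this step.
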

	\begin{proof}
		Let $\hat{k}^{\alpha}_z$ be the normalized reproducing kernel of $\mathcal{D}_{\alpha}$ at $z \in \mathbb{D}.$
		For all $\alpha>0,$ we have
		\begin{align*}
			\sum_{n=0}^{\infty}(n+1)^{\alpha-1}|z|^{2n}\cong (1-|z|^2)^{-\alpha}.
		\end{align*}
		This follows from the binomial series expansion
		\begin{align*}
			(1-|z|^2)^{-\alpha}=\sum_{n=0}^{\infty}\frac{\Gamma(n+\alpha)}{\Gamma(n+1)\Gamma(\alpha)}|z|^{2n},
		\end{align*}
		together with Stirling's formula, which gives
		\begin{align*}
			\frac{\Gamma(n+\alpha)}{\Gamma(n+1)}\cong (n+1)^{\alpha-1}.
		\end{align*}  
		Hence, for $\alpha \in (0,1),$ we have
		\begin{align*}
			\|k^{\alpha}_z\|_{\mathcal{D}_{\alpha}}\cong (1-|z|^2)^{-\frac{\alpha}{2}}~~\text{for all $z \in \mathbb{D}.$} 
		\end{align*}
		$(i)$~~Suppose that $C_{\psi,\varphi }$ is bounded on $\mathcal{D}_{\alpha}.$ Then, a direct consequence of the reproducing property of the kernel functions gives $C_{\psi,\varphi }^*k^{\alpha}_z=\overline{\psi(z)}k^{\alpha}_{\varphi(z)}.$
		It follows that
		\begin{align}\label{TC0E1}
			\|C_{\psi,\varphi }^*\hat{k}^{\alpha}_z\|_{\mathcal{D}_{\alpha}}=\frac{|\overline{\psi(z)}|\|k^{\alpha}_{\varphi(z)}\|_{\mathcal{D}_{\alpha}}}{\|k^{\alpha}_{z}\|_{\mathcal{D}_{\alpha}}} \cong |\psi(z)|\left(\frac{1-|z|^2}{1-|\varphi(z)|^2}\right)^{\frac{\alpha}{2}}.
		\end{align}
		Boundedness of $C_{\psi,\varphi }$ on $\mathcal{D}_{\alpha}$ implies that
		\begin{align*}
			\sup_{z \in \mathbb{D}}\|C_{\psi,\varphi }^*\hat{k}^{\alpha}_z\|_{\mathcal{D}_{\alpha}} \cong \sup_{z \in \mathbb{D}}|\psi(z)|\left(\frac{1-|z|^2}{1-|\varphi(z)|^2}\right)^{\frac{\alpha}{2}}<\infty,
		\end{align*}
		as desired.\\
		$(ii)$~~Suppose that $C_{\psi,\varphi }$ is compact on $\mathcal D_{\alpha}.$
		For $\alpha \in (0,1),$ it follows from \cite[Th. 2.17]{Cowen_BOOK} that the normalized kernels $\{\hat{k}^{\alpha}_z\}$ converges to  $0$ weakly as $|z|\to 1^-.$ 
		Since $C_{\psi,\varphi }$ is compact, so is its adjoint $C_{\psi,\varphi }^*,$ and hence $$\|C_{\psi,\varphi }^*\hat{k}^{\alpha}_z\|_{\mathcal{D}_{\alpha}}\to 0~~\text{as}~~|z|\to 1^{-}.$$
		Therefore, taking the limit $|z|\to 1^-$ on both sides of \eqref{TC0E1} yields the desired result.
	\end{proof}
	
	As an immediate consequence of Proposition \ref{TB0}, we obtain the following two corollaries.
	We first recall that $\text{Aut}(\mathbb{D})$ denotes the group of all bi-holomorphic self-maps of $\mathbb{D}$. From elementary complex analysis, each $\varphi \in \text{Aut}(\mathbb{D})$ can be expressed in the form
	$$\varphi(z)=\eta \frac{a-z}{1-\bar a z}, ~~\eta \in \partial \mathbb D,~~ a \in \mathbb D.$$
	Throughout this article, we use the notation $\varphi_a$ to denote the automorphism given by $\varphi_a(z)=\frac{a-z}{1-\bar a z}.$

	\begin{cor}\label{C0C1}
		For $\alpha \in (0,1),$ there exists no nonzero compact weighted composition operator $C_{\psi,\varphi }$ on $\mathcal{D}_{\alpha}$ with $\varphi \in \text{Aut}(\mathbb{D}).$ 
	\end{cor}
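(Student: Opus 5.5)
We argue by contradiction using Proposition \ref{TB0}(ii). Suppose $\alpha \in (0,1)$, $\varphi \in \text{Aut}(\mathbb{D})$, and $C_{\psi,\varphi}$ is compact on $\mathcal{D}_{\alpha}$. Proposition \ref{TB0}(ii) gives
\[
\lim_{|z|\to 1^-}|\psi(z)|\left(\frac{1-|z|^2}{1-|\varphi(z)|^2}\right)^{\frac{\alpha}{2}}=0 .
\]
We will show that for an automorphism the ratio $\frac{1-|z|^2}{1-|\varphi(z)|^2}$ is bounded below by a positive constant independent of $z$. This will force $\psi(z)\to 0$ as $|z|\to 1^-$, and hence $\psi\equiv 0$.

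The key computation is the standard identity for $\varphi(z)=\eta\varphi_a(z)$ with $|\eta|=1$:
\[
1-|\varphi(z)|^2=1-|\varphi_a(z)|^2=\frac{(1-|a|^2)(1-|z|^2)}{|1-\bar a z|^2}.
\]
It follows that
\[
\frac{1-|z|^2}{1-|\varphi(z)|^2}=\frac{|1-\bar a z|^2}{1-|a|^2}\ \geq\ \frac{(1-|a|)^2}{1-|a|^2}=\frac{1-|a|}{1+|a|}>0
\]
for all $z\in\mathbb{D}$. Raising to the power $\alpha/2>0$ preserves a positive lower bound $c_a>0$. The limit above then gives $|\psi(z)|\le c_a^{-1}\,|\psi(z)|\bigl(\tfrac{1-|z|^2}{1-|\varphi(z)|^2}\bigr)^{\alpha/2}\to 0$ as $|z|\to1^-$.

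To conclude, the maximum modulus principle applied to $\psi$ finishes the argument. Given $\varepsilon>0$, choose $r<1$ so that $|\psi|<\varepsilon$ on $r<|z|<1$. Then $\max_{|z|\le r'}|\psi|\le\varepsilon$ for every $r'\in(r,1)$. Hence $\sup_{\mathbb{D}}|\psi|\le\varepsilon$ for every $\varepsilon>0$, so $\psi\equiv 0$ and $C_{\psi,\varphi}=0$, contradicting nonzeroness.

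There is no real obstacle here; the only points needing care are the automorphism identity and the fact that $\alpha>0$, which is essential for both Proposition \ref{TB0} and the positivity of the exponent.
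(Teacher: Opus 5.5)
Your proposal is correct and takes essentially the same approach as the paper: the same lower bound $\frac{1-|a|}{1+|a|}$ for $\frac{1-|z|^2}{1-|\varphi(z)|^2}$, combined with Proposition \ref{TB0}(ii) and the maximum modulus principle. The paper obtains that bound via $(1-|z|^2)|\varphi'(z)|=1-|\varphi(z)|^2$ and $|\varphi_a'|\le\frac{1+|a|}{1-|a|}$, which is the same computation as your explicit identity.
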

	
	\begin{proof}
		Let $\varphi \in \text{Aut}(\mathbb{D}).$ Then a simple computation shows that
		$$(1-|z|^2)|\varphi'(z)|=1-|\varphi(z)|^2~~\text{and}~~|\varphi'(z)|=|\varphi_a'(z)|\leq \frac{1+|a|}{1-|a|}.$$
		This implies that
		\begin{align}\label{COC1E1}
			|\psi(z)|\left(\frac{1-|a|}{1+|a|}\right)^{\frac{\alpha}{2}}	\leq |\psi(z)|\left(\frac{1-|z|^2}{1-|\varphi(z)|^2}\right)^{\frac{\alpha}{2}}.
		\end{align}
		Since $C_{\psi,\varphi }$ is compact on $\mathcal{D}_{\alpha},$ it follows from Proposition~\ref{TB0} that the expression on the right of inequality \eqref{COC1E1} tends to $0$ as $|z| \to 1^-.$ This implies 
		$$\lim_{|z|\to 1^-}	|\psi(z)|=0.$$
		As $\psi \in \text{Hol}(\mathbb{D}),$ the  Maximum Modulus Theorem ensures that $\psi\equiv 0.$
	\end{proof}

	\begin{cor}\label{C0C2}
		Let $C_{\psi,\varphi }$ be a compact operator on $\mathcal{D}_{\alpha}$ for some $\alpha \in (0,1).$ If $\varphi$ has no fixed point in $\mathbb{D}$ and $\psi$ is continuous on $\overline{\mathbb{D}}$ then $\psi$ necessarily vanishes at some point on $\partial \mathbb{D}.$  
	\end{cor}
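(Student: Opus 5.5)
We argue by contradiction, using Proposition~\ref{TB0}(ii) together with the Denjoy--Wolff theorem. Suppose that $C_{\psi,\varphi}$ is compact on $\mathcal D_\alpha$, that $\varphi$ has no fixed point in $\mathbb D$, that $\psi$ is continuous on $\overline{\mathbb D}$, and that $\psi$ has no zero on $\partial\mathbb D$. Since $\partial\mathbb D$ is compact and $\psi$ is continuous there, $|\psi|\ge \delta>0$ on $\partial\mathbb D$. By continuity on $\overline{\mathbb D}$, there is then $r_0<1$ with $|\psi(z)|\ge \delta/2$ whenever $r_0<|z|<1$. Proposition~\ref{TB0}(ii) gives
\[
|\psi(z)|\left(\frac{1-|z|^2}{1-|\varphi(z)|^2}\right)^{\frac{\alpha}{2}}\to 0 \quad\text{as } |z|\to 1^-,
\]
and since $\alpha>0$ this forces
\[
\frac{1-|\varphi(z)|^2}{1-|z|^2}\to\infty \quad\text{as } |z|\to 1^-.
\]

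Next we bring in the fixed-point hypothesis. Because $\varphi$ has no fixed point in $\mathbb D$, the Denjoy--Wolff theorem provides $\omega\in\partial\mathbb D$ such that the angular derivative $\varphi'(\omega)$ exists with $0<\varphi'(\omega)\le 1$. Equivalently, Julia's lemma gives
\[
\liminf_{z\to\omega}\frac{1-|\varphi(z)|}{1-|z|}=\beta\le 1,
\]
and in fact the radial limit along $z=r\omega$, $r\to1^-$, equals $\beta$. Consequently $(1-|\varphi(r\omega)|^2)/(1-r^2)$ stays bounded as $r\to1^-$. This contradicts the divergence obtained in the first step, so $\psi$ must vanish somewhere on $\partial\mathbb D$.

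The only nontrivial input is the boundary behaviour at the Denjoy--Wolff point: a fixed-point-free self-map has a boundary point at which the angular derivative is finite, indeed at most $1$. I would cite this as the classical Denjoy--Wolff/Julia--Carathéodory theorem rather than reprove it. The remaining points are routine. Continuity of $\psi$ on $\overline{\mathbb D}$ is exactly what lets a lower bound on $\partial\mathbb D$ pass to a lower bound near $\partial\mathbb D$. The division by $|\psi|$ in the first step, which turns Proposition~\ref{TB0}(ii) into the divergence of the quotient, is legitimate precisely because $\alpha>0$ and $|\psi|\ge\delta/2$ there.
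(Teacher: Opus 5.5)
Your proof is correct and takes essentially the paper's route: Proposition~\ref{TB0}(ii) combined with the Denjoy--Wolff/Julia fact that a fixed-point-free $\varphi$ has a boundary point $\omega$ along which $(1-|\varphi(z)|)/(1-|z|)$ stays bounded (the paper uses a sequence $z_n\to\omega$ with limit $\le 1$, you use the radius via Julia--Carath\'eodory). The only difference is framing: the paper argues directly and so shows that $\psi$ vanishes at the Denjoy--Wolff point itself, while your contradiction argument (with a uniform lower bound on $|\psi|$ near $\partial\mathbb{D}$) only yields some zero, though restricting your estimate to the radius $r\omega$ would give $\psi(\omega)=0$ directly as well.
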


	\begin{proof}
		Suppose that $C_{\psi,\varphi }$ is compact on $\mathcal{D}_{\alpha}$ for some $\alpha \in (0,1),$ and that $\varphi$ has no fixed points in $\mathbb{D}.$ Then there exits a point $\zeta \in \partial \mathbb{D}$ and a sequence $\{z_n\} \subset \mathbb{D}$ such that $$\lim_{n\to \infty}z_n=\lim_{n\to \infty}\varphi(z_n)=\zeta~~\text{and}~~\lim_{n\to \infty} \frac{1-|\varphi(z_n)|}{1-|z_n|}\leq 1,$$ 
		see \cite[p. 56]{Cowen_BOOK}. 
		From the Schwarz-Pick Theorem \cite[p. 48]{Cowen_BOOK} we have 
		$$\frac{1-|\varphi(z_n)|}{1-|z_n|}\geq \frac{1-|\varphi(0)|}{1+|\varphi(0)|}>0,$$
		and we get 
		$$\lim_{n\to \infty} \frac{1-|z_n|}{1-|\varphi(z_n)|}\geq 1.$$
		As $\psi$ is continuous on $\overline{\mathbb{D}},$ for each $\alpha \in (0,1),$ we obtain
		\begin{align}\label{ee11}
			|\psi(\zeta)|=\lim_{n\to \infty}|\psi(z_n)| \lesssim \lim_{n\to \infty}|\psi(z_n)|\left(\frac{1-|z_n|^2}{1-|\varphi(z_n)|^2}\right)^{\frac{\alpha}{2}}.
		\end{align}
		As $C_{\psi,\varphi }$ is compact on $\mathcal{D}_{\alpha},$ Proposition~\ref{TB0} implies that the expression on the right side of \eqref{ee11} tends to $0$ as $n \to \infty.$ Consequently, $\psi(\zeta)=0,$ which completes the proof.
	\end{proof}
	
	\begin{remark}
		Corollary~\ref{C0C2} provides a simple criterion to identify weighted composition operators that are not compact on $\mathcal D_{\alpha}$ for all $\alpha \in (0,1)$. For example, let $\varphi(z)=\frac{1+z^2}{2}$ and $\psi(z)=2+z,$ then $\varphi$ has no fixed point on $\mathbb D$ and $\psi$ has no zero on $\partial \mathbb D,$ it follows from Corollary~\ref{C0C2} that $C_{\psi,\varphi }$ is not compact on $\mathcal{D}_{\alpha}$ for all $\alpha \in (0,1).$
	\end{remark}

	Now consider the functions $\psi(z)=(1+z)^{1+\alpha}$ and $\varphi(z)=z^2.$ A simple computation yields 
	$$\sup_{z \in \mathbb{D}}|\psi(z)|\left(\frac{1-|z|^2}{1-|\varphi(z)|^2}\right)^{\frac{\alpha}{2}} \leq 2^{1+\alpha}.$$
	However, since $\psi \notin \mathcal D_{\alpha},$ the operator $C_{\psi,\varphi}$ is not bounded on $D_{\alpha}.$ This demonstrates that the condition in Proposition~\ref{TB0} is not sufficient for the boundedness of weighted composition operators.
	Therefore, extra conditions on the inducing functions are necessary to establish a sufficient condition for the boundedness of $C_{\psi,\varphi}.$
	
	We are now in a position to prove Theorem \ref{TB1}.
	
	\begin{proof}[Proof of Theorem \ref{TB1}]
		Let $f \in \mathcal{D}_{\alpha}.$ Then we have
		\begin{align}\label{TB1EE1}
			&\|C_{\psi,\varphi}f\|^2_{\mathcal{D}_{\alpha}}\nonumber\\
			&\cong |C_{\psi,\varphi}f(0)|^2+|(C_{\psi,\varphi}f)'(0)|^2+\|(C_{\psi,\varphi}f)''\|^2_{A^2_{\alpha+2}}\nonumber\\
			&= |\psi(0)f(\varphi(0))|^2+|(\psi f\circ \varphi)'(0)|^2+\|(\psi f\circ \varphi)''\|^2_{A^2_{\alpha+2}}\nonumber\\
			&\lesssim |\psi(0)f(\varphi(0))|^2+|\psi'(0)f(\varphi(0))|^2+|\psi(0)f'(\varphi(0))\varphi'(0)|^2+\|(\psi f\circ \varphi)''\|^2_{A^2_{\alpha+2}}.
		\end{align}
		Since $C_{\varphi }$ is bounded on $A^2_{\alpha}$ (see \cite{ZHU_BOOK}), we obtain
		\begin{align*}
			|f(\varphi(0))| \lesssim \|C_{\varphi }f\|_{A^2_{\alpha}} \lesssim \|f\|_{A^2_{\alpha}} \lesssim \|f\|_{D_{\alpha}}
		\end{align*}
		and 
		\begin{align*}
			|f'(\varphi(0))| \lesssim \|C_{\varphi }f'\|_{A^2_{\alpha}} \lesssim \|f'\|_{A^2_{\alpha}} \lesssim \|f\|_{D_{\alpha}}.
		\end{align*}
		This implies that
		\begin{align*}
			|\psi(0)f(\varphi(0))|^2+|\psi'(0)f(\varphi(0))|^2+|\psi(0)f'(\varphi(0))\varphi'(0)|^2 \lesssim \|f\|^2_{D_{\alpha}}.
		\end{align*}
		Now, 
		\begin{align*}
			&\|(\psi f\circ \varphi)''\|^2_{A^2_{\alpha+2}}\\
			&=\int_{\mathbb{D}}|(\psi(z) f(\varphi(z)))''|^2dA_{\alpha+2}(z)\\
			&\lesssim \int_{\mathbb{D}}|\psi''(z) f(\varphi(z))|^2dA_{\alpha+2}(z)+\int_{\mathbb{D}}|\psi'(z) f'(\varphi(z))\varphi'(z)|^2dA_{\alpha+2}(z)\\
			& +\int_{\mathbb{D}}|\psi(z) f'(\varphi(z))\varphi''(z)|^2dA_{\alpha+2}(z)+\int_{\mathbb{D}}|\psi(z) f''(\varphi(z))\varphi'^2(z)|^2dA_{\alpha+2}(z).
		\end{align*}
		By the first integral above and condition $(i)$ of the theorem, we obtain
		\begin{align*}
			\int_{\mathbb{D}}|\psi''(z) f(\varphi(z))|^2dA_{\alpha+2}(z) \lesssim \|C_{\varphi }f\|^2_{A^2_{\alpha}}\lesssim \|f\|^2_{A^2_{\alpha}} \lesssim \|f\|^2_{D_{\alpha}}.
		\end{align*}
		The second integral, together with condition $(ii),$ implies that
		\begin{align*}
			\int_{\mathbb{D}}|\psi'(z) f'(\varphi(z))\varphi'(z)|^2dA_{\alpha+2}(z) \lesssim \|C_{\varphi }f'\|^2_{A^2_{\alpha}}\lesssim \|f'\|^2_{A^2_{\alpha}} \lesssim \|f\|^2_{D_{\alpha}}.
		\end{align*}
		From the third integral, together with condition $(iii),$ we get
		\begin{align*}
			\int_{\mathbb{D}}|\psi(z) f'(\varphi(z))\varphi''(z)|^2dA_{\alpha+2}(z) \lesssim \|C_{\varphi }f'\|^2_{A^2_{\alpha}}\lesssim \|f'\|^2_{A^2_{\alpha}} \lesssim \|f\|^2_{D_{\alpha}}.
		\end{align*}
		Applying condition $(iv)$ to the fourth integral yields
		\begin{align*}
			\int_{\mathbb{D}}|\psi(z) f''(\varphi(z))\varphi'^2(z)|^2dA_{\alpha+2}(z) \lesssim \int_{\mathbb{D}}|f''(\varphi(z))|^2|\varphi'(z)|^2(1-|\varphi(z)|^2)^{\alpha+2}dA(z).
		\end{align*}
		As $\varphi$ is univalent on $\mathbb{D},$ a change of variable $w=\varphi(z)$ gives
		\begin{align*}
			&\int_{\mathbb{D}}|\psi(z) f''(\varphi(z))\varphi'^2(z)|^2dA_{\alpha+2}(z)\nonumber\\
			& \lesssim \int_{\mathbb{D}}|f''(w)|^2(1-|w|^2)^{\alpha+2}dA(w) =  \|f''\|^2_{A^2_{\alpha+2}}\lesssim \|f\|^2_{D_{\alpha}}.
		\end{align*}
		From all of the above relations, it follows that
		\begin{align*}
			\|C_{\psi,\varphi}f\|_{\mathcal{D}_{\alpha}} \lesssim \|f\|_{D_{\alpha}},
		\end{align*}
		as required.
	\end{proof}
	
	The linear fractional transformations (LFTs) on $\mathbb C$ are mappings of the form 
	$$z \to \frac{az+b}{cz+d}, ~~\text{where $a,b,c,d \in \mathbb C$ with $ad-bc\neq 0.$}$$
	In particular, if a LFT is a self-map of $\mathbb D$ then $|cz+d|$ is bounded away from zero on $\mathbb D$. Therefore it follows from Theorem \ref{TB1} that the composition operator induced by a LFT that is a self-map of $\mathbb D$ is bounded on $\mathcal D_{\alpha}$ for every $\alpha \in (-1,1).$ 
	Moreover, if both $\varphi$ and $\psi$ are LFTs that are also self-maps of $\mathbb D$, then Theorem~\ref{TB1} implies that the weighted composition operator $C_{\psi,\varphi }$ is bounded on $\mathcal D_{\alpha},$
	illustrating the applicability of Theorem~\ref{TB1} beyond the unweighted case.
	Furthermore, Theorem \ref{TB1} is useful for identifying functions that are not LFTs but nevertheless induce bounded weighted composition operators.
	We now give an illustrative example.
	
	\begin{example}
		For $r \in (0,1),$ let 
		$$\varphi_{r,1}(z)=exp\left(\frac{(1-r)(z+1)}{rz-1}\right)~~\text{and}~~\psi \equiv 1.$$ Then $\varphi_{r,1}$ is a univalent holomorphic self-map of $\mathbb D$ with $\varphi_{r,1}'' \in \text{H}^{\infty}(\mathbb D).$ A straightforward computation shows that
		$$\left(\frac{1-|z|^2}{1-|\varphi_{r,1}(z)|^2}\right)^{\frac{\alpha}{2}+1} \lesssim \left(\frac{1+r}{1-r}\right)^{\frac{\alpha}{2}+1}.$$
		Consequently, for each $r \in (0,1),$ the inducing functions $\varphi_{r,1}$ and $\psi$ satisfy the hypotheses of Theorem~\ref{TB1}, and hence the weighted composition operator $C_{\psi,\varphi_{r,1}}$ is bounded on $\mathcal D_{\alpha}$ for all $\alpha \in (-1,1).$ We note that the same conclusion also holds when $\psi$ is a polynomial, $\psi=\varphi_{r},$ $\psi=\varphi_{r,1}$ or $\psi(z)=(1-z)^{1+r}.$
	\end{example}

	We now establish a sufficient condition for $C_{\psi,\varphi }$ to be compact on $\mathcal D_{\alpha}$ for every $\alpha \in (-1,1).$ For this purpose, we require the following lemmas from the literature.

	\begin{lemma}\cite[Lemma 2.9]{Kumar_2009}\label{L1}
		Let $C_{\psi,\varphi }$ be a bounded operator on $\mathcal{D}_{\alpha}$ for $\alpha \in (-1,1).$ Then $C_{\psi,\varphi }$ is compact on $\mathcal{D}_{\alpha}$ if and only if whenever $\{f_n\}$ is a bounded sequence in $\mathcal{D}_{\alpha}$ converging to zero uniformly on compact subsets of $\mathbb{D},$ then $\|C_{\psi,\varphi }f_n\|_{\mathcal{D}_{\alpha}} \to 0.$
	\end{lemma}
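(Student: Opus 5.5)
The plan is to prove the two implications separately. The only tools needed are the reproducing kernel structure of $\mathcal{D}_{\alpha}$ from \eqref{repker}, Montel's theorem, and the fact that a compact operator maps weakly null sequences to norm null sequences. Throughout, $C_{\psi,\varphi}$ is assumed bounded.

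\emph{Necessity.} Suppose $C_{\psi,\varphi}$ is compact, and let $\{f_n\}$ be bounded in $\mathcal{D}_{\alpha}$ with $f_n\to 0$ uniformly on compact subsets of $\mathbb{D}$. I first show that $f_n\to 0$ weakly. For each $w\in\mathbb{D}$ the reproducing property gives $\langle f_n,k^{\alpha}_w\rangle_{\mathcal{D}_{\alpha}}=f_n(w)\to 0$. The span of $\{k^{\alpha}_w: w\in\mathbb{D}\}$ is dense, because any $g$ orthogonal to all kernels satisfies $g(w)=0$ for every $w$, so $g\equiv 0$. Since $\sup_n\|f_n\|_{\mathcal{D}_{\alpha}}<\infty$, a standard $\varepsilon/3$ argument extends $\langle f_n,h\rangle\to 0$ from the dense span to every $h\in\mathcal{D}_{\alpha}$. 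A compact operator maps weakly null sequences to norm null sequences, so $\|C_{\psi,\varphi}f_n\|_{\mathcal{D}_{\alpha}}\to 0$.

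\emph{Sufficiency.} Let $\{g_n\}$ be a sequence with $\|g_n\|_{\mathcal{D}_{\alpha}}\le M$; I need a subsequence whose images converge in norm.
\begin{enumerate}
\item[(a)] Uniform local bound. By the reproducing property and Cauchy--Schwarz, $|g_n(w)|\le M\|k^{\alpha}_w\|_{\mathcal{D}_{\alpha}}$. By \eqref{repker}, $\|k^{\alpha}_w\|^2_{\mathcal{D}_{\alpha}}=\sum_n(n+1)^{\alpha-1}|w|^{2n}$, which is bounded on each disc $|w|\le r<1$. So $\{g_n\}$ is uniformly bounded on compacta.
\item[(b)] Extraction. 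By Montel's theorem a subsequence $g_{n_k}$ converges uniformly on compact subsets to some $g\in \text{Hol}(\mathbb{D})$.
\item[(c)] The limit lies in $\mathcal{D}_{\alpha}$. Cauchy's formula on a circle $|z|=r$ shows that the Taylor coefficients of $g_{n_k}$ converge to those of $g$. Applying Fatou's lemma to the series $\sum(n+1)^{1-\alpha}|a_n|^2$ gives $\|g\|_{\mathcal{D}_{\alpha}}\le M$.
\item[(d)] Conclusion. Put $f_k=g_{n_k}-g$. Then $\{f_k\}$ is bounded by $2M$ in $\mathcal{D}_{\alpha}$ and tends to $0$ uniformly on compacta, so the hypothesis gives $\|C_{\psi,\varphi}f_k\|_{\mathcal{D}_{\alpha}}\to 0$. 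Since $C_{\psi,\varphi}$ is linear, this says $C_{\psi,\varphi}g_{n_k}\to C_{\psi,\varphi}g$ in norm, which proves compactness.
\end{enumerate}

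The step needing the most care is (c). If $g$ were not in $\mathcal{D}_{\alpha}$, then $f_k$ would not lie in the space and the hypothesis could not be applied. Coefficientwise convergence combined with Fatou's lemma settles this cleanly in the sequence-space description of $\mathcal{D}_{\alpha}$. The other steps are routine once the kernel bound in (a) is in place.
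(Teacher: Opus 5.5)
Your proof is correct, including the key step (c), where coefficientwise convergence and Fatou's lemma put the Montel limit back into $\mathcal{D}_{\alpha}$. The paper gives no proof of its own and simply imports the statement from \cite[Lemma 2.9]{Kumar_2009}; your argument is the standard one behind that citation, and since it uses nothing about $C_{\psi,\varphi}$ beyond linearity, it actually works for every bounded operator on $\mathcal{D}_{\alpha}$.
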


	\begin{lemma}\cite[Th. 10.28]{RUDIN_BOOK}\label{L2}
		Suppose $f_n \in \text{Hol}(\mathbb{D}),$ for $n=1,2,\ldots,$ and $\{f_n\}$ converges to $f$ uniformly on compact subsets of $\mathbb{D}.$ Then $f \in \text{Hol}(\mathbb{D}),$ and $\{f'_n\}$ converges to $f'$ uniformly on compact subsets of $\mathbb{D}.$
	\end{lemma}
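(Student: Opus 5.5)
The plan is the classical Weierstrass argument, combining Morera's theorem with the Cauchy integral formula. It has two steps: first show that the limit $f$ is holomorphic, then control $f_n'-f'$ on compact sets by a Cauchy estimate, which turns uniform convergence of $f_n$ on a slightly larger circle into uniform convergence of the derivatives.

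\emph{Step 1: $f\in \text{Hol}(\mathbb{D})$.} Since each $f_n$ is continuous and the convergence is uniform on compact subsets, $f$ is continuous on $\mathbb{D}$. Fix a closed disc $\overline{D}\subset\mathbb{D}$ and a closed triangle $T\subset D$. By Cauchy--Goursat, $\int_{\partial T} f_n(\zeta)\,d\zeta=0$ for every $n$. Uniform convergence on the compact set $\partial T$ allows passing to the limit under the integral, so $\int_{\partial T} f(\zeta)\,d\zeta=0$. Morera's theorem then gives that $f$ is holomorphic in $D$. Since $D$ was arbitrary, $f$ is holomorphic in $\mathbb{D}$.

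\emph{Step 2: convergence of derivatives.} Let $K\subset\mathbb{D}$ be compact. Choose $0<\rho<1$ and $\delta>0$ with $K\subset\{|z|\le \rho-\delta\}$. For $z\in K$, the Cauchy integral formula for the derivative of the holomorphic function $f_n-f$ gives
\begin{align*}
f_n'(z)-f'(z)=\frac{1}{2\pi i}\int_{|\zeta|=\rho}\frac{f_n(\zeta)-f(\zeta)}{(\zeta-z)^2}\,d\zeta .
\end{align*}
Since $|\zeta-z|\ge\delta$ on the circle of integration, this yields
\begin{align*}
\sup_{z\in K}|f_n'(z)-f'(z)|\le \frac{\rho}{\delta^2}\sup_{|\zeta|=\rho}|f_n(\zeta)-f(\zeta)| .
\end{align*}
The right-hand side tends to $0$ because the circle $|\zeta|=\rho$ is a compact subset of $\mathbb{D}$. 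Hence $f_n'\to f'$ uniformly on $K$.

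The only point needing care is the choice of $\rho$ and $\delta$: the circle must enclose $K$ with a uniform margin $\delta$ while still being a compact subset of $\mathbb{D}$. This is always possible because $K$ has positive distance from $\partial\mathbb{D}$. Beyond this, no genuine obstacle arises.
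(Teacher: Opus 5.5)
Your proof is correct. The paper gives no proof of its own and only cites Rudin's Theorem 10.28, whose argument is the same as yours: Morera's theorem gives holomorphy of $f$, and a Cauchy estimate applied to $f-f_n$ on a compact neighbourhood of $K$ gives convergence of the derivatives (using the single circle $|\zeta|=\rho$ instead of Rudin's union of closed discs of fixed radius about the points of $K$ is just the natural simplification on $\mathbb{D}$).
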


	\begin{proof}[Proof of Theorem \ref{TC1}]
		Let $\{f_n\}$ be a bounded sequence in $\mathcal{D}_{\alpha}$ converging to zero uniformly on compact subsets of $\mathbb{D}.$ It follows from Theorem \ref{TB1} that $C_{\psi,\varphi }$ is a bounded linear operator on $\mathcal{D}_{\alpha}.$ By Lemma~\ref{L1}, it suffices to show that $\|C_{\psi,\varphi }f_n\|_{\mathcal{D}_{\alpha}} \to 0.$ From \eqref{TB1EE1}, we have
		\begin{align*}
			&\|C_{\psi,\varphi}f_n\|^2_{\mathcal{D}_{\alpha}}\\
			&\lesssim |\psi(0)f_n(\varphi(0))|^2+|\psi'(0)f_n(\varphi(0))|^2+|\psi(0)f'_n(\varphi(0))\varphi'(0)|^2+\|(\psi f_n\circ \varphi)''\|^2_{A^2_{\alpha+2}}.
		\end{align*}
		Fix any $\epsilon>0.$ Since $\{\varphi(0)\}$ is a compact subset of $\mathbb{D},$ it follows from Lemma \ref{L2} that there exists $N_0 \in \mathbb{N}$ such that 
		\begin{align*}
			|\psi(0)f_n(\varphi(0))|^2+|\psi'(0)f_n(\varphi(0))|^2+|\psi(0)f'_n(\varphi(0))\varphi'(0)|^2 <\epsilon^2 ~~\forall n\geq N_0.
		\end{align*}
		Now, we have
		\begin{align*}
			&\|(\psi f_n\circ \varphi)''\|^2_{A^2_{\alpha+2}}\\
			&=\int_{\mathbb{D}}|(\psi(z) f_n(\varphi(z)))''|^2dA_{\alpha+2}(z)\\
			&\lesssim \int_{\mathbb{D}}|\psi''(z) f_n(\varphi(z))|^2dA_{\alpha+2}(z)+\int_{\mathbb{D}}|\psi'(z) f'_n(\varphi(z))\varphi'(z)|^2dA_{\alpha+2}(z)\\
			& +\int_{\mathbb{D}}|\psi(z) f'_n(\varphi(z))\varphi''(z)|^2dA_{\alpha+2}(z)+\int_{\mathbb{D}}|\psi(z) f''_n(\varphi(z))\varphi'^2(z)|^2dA_{\alpha+2}(z).
		\end{align*}
		From condition $(i),$ there exists $r_1 \in (0,1)$ such that 
		$$|\psi''(z)|(1-|z|^2)<\epsilon~~\text{for all}~~z \in \mathbb{D}\setminus \overline{B(0,r_1)}.$$
		Hence, we obtain
		\begin{align}\label{T1I11}
			&\int_{\mathbb{D}\setminus \overline{B(0,r_1)}}|\psi''(z) f_n(\varphi(z))|^2dA_{\alpha+2}(z)\nonumber\\
			& \lesssim \epsilon^2 \int_{\mathbb{D}}|f_n(\varphi(z))|^2dA_{\alpha}(z)
			=\epsilon^2 \|C_{\varphi}f_n\|^2_{A^2_{\alpha}} \lesssim \epsilon^2 \|f_n\|^2_{A^2_{\alpha}}\lesssim \epsilon^2 \|f_n\|^2_{D_{\alpha}} \lesssim \epsilon^2.
		\end{align}
		Continuity of $\varphi$ and $\psi''$ on $\overline{B(0,r_1)}$ implies that $\varphi(\overline{B(0,r_1)})$ is compact and  $\psi''$ is bounded on $\overline{B(0,r_1)}.$ By Lemma \ref{L2}, there exists $N_1\in \mathbb{N}$ such that 
		\[|f_n(\varphi(z))|<\epsilon~~\forall n \geq N_1,~~z \in \overline{B(0,r_1)}.\] 
		Therefore, for all $n \geq N_1,$ we have
		\begin{align}\label{T1I12}
			\int_{\overline{B(0,r_1)}}|\psi''(z) f_n(\varphi(z))|^2dA_{\alpha+2}(z) \lesssim \epsilon^2.
		\end{align}
		Combining \eqref{T1I11} and \eqref{T1I12}, we obtain, for all $n \geq N_1,$
		\begin{align*}
			\int_{\mathbb{D}}|\psi''(z) f_n(\varphi(z))|^2dA_{\alpha+2}(z) \lesssim \epsilon^2.
		\end{align*}
		By condition $(ii),$ there exists $r_2 \in (0,1)$ such that 
		$$|\varphi'(z)\psi'(z)|(1-|z|^2)<\epsilon~~\text{for all}~~z \in \mathbb{D}\setminus \overline{B(0,r_2)}.$$
		Then it follows that
		\begin{align}\label{T1I21}
			&\int_{\mathbb{D}\setminus \overline{B(0,r_2)}}|\psi'(z) f'_n(\varphi(z))\varphi'(z)|^2dA_{\alpha+2}(z)\nonumber\\
			& \lesssim \epsilon^2 \int_{\mathbb{D}}|f'_n(\varphi(z))|^2dA_{\alpha}(z)
			=\epsilon^2 \|C_{\varphi}f'_n\|^2_{A^2_{\alpha}} \lesssim \epsilon^2 \|f'_n\|^2_{A^2_{\alpha}}\lesssim \epsilon^2 \|f_n\|^2_{D_{\alpha}} \lesssim \epsilon^2.
		\end{align}
		Since $\varphi,\varphi'$ and $\psi'$ are continuous on $\overline{B(0,r_2)},$ it follows that $\varphi(\overline{B(0,r_2)})$ is compact and that $\varphi',\psi'$ are bounded on $\overline{B(0,r_2)}.$ By Lemma \ref{L2} there exists $N_2\in \mathbb{N}$ such that 
		\[|f'_n(\varphi(z))|<\epsilon~~\forall n \geq N_2,~~z \in \overline{B(0,r_2)}.\] 
		Thus, for all $n \geq N_2,$ we have
		\begin{align}\label{T1I22}
			\int_{\overline{B(0,r_2)}}|\psi'(z) f'_n(\varphi(z))\varphi'(z)|^2dA_{\alpha+2}(z) \lesssim \epsilon^2.
		\end{align}
		Combining \eqref{T1I21} and \eqref{T1I22}, for all $n \geq N_2,$ we get
		\begin{align*}
			\int_{\mathbb{D}}|\psi'(z) f'_n(\varphi(z))\varphi'(z)|^2dA_{\alpha+2}(z) \lesssim \epsilon^2.
		\end{align*}
		Condition $(iii)$ implies that there exists $r_3 \in (0,1)$ such that 
		$$|\varphi''(z)\psi(z)|(1-|z|^2)<\epsilon~~\text{for all}~~z \in \mathbb{D}\setminus \overline{B(0,r_3)}.$$
		Consequently, we have
		\begin{align}\label{T1I31}
			&\int_{\mathbb{D}\setminus \overline{B(0,r_3)}}|\psi(z) f'_n(\varphi(z))\varphi''(z)|^2dA_{\alpha+2}(z)\nonumber\\
			& \lesssim \epsilon^2 \int_{\mathbb{D}}|f'_n(\varphi(z))|^2dA_{\alpha}(z)
			=\epsilon^2 \|C_{\varphi}f'_n\|^2_{A^2_{\alpha}} \lesssim \epsilon^2 \|f'_n\|^2_{A^2_{\alpha}}\lesssim \epsilon^2 \|f_n\|^2_{D_{\alpha}} \lesssim \epsilon^2.
		\end{align}
		Continuity of $\varphi,\psi$ and $\varphi''$ on $\overline{B(0,r_3)}$ implies that $\varphi(\overline{B(0,r_3)})$ is compact and that $\psi$ and $\varphi''$ are bounded on $\overline{B(0,r_3)}.$ By Lemma \ref{L2}, there exists $N_3\in \mathbb{N}$ such that 
		\[|f'_n(\varphi(z))|<\epsilon~~\forall n \geq N_3,~~z \in \overline{B(0,r_3)}.\] 
		Therefore, for all $n \geq N_3,$ we have
		\begin{align}\label{T1I32}
			\int_{\overline{B(0,r_1)}}|\psi(z) f'_n(\varphi(z))\varphi''(z)|^2dA_{\alpha+2}(z) \lesssim \epsilon^2.
		\end{align}
		Combining \eqref{T1I31} and \eqref{T1I32}, for all $n \geq N_3$ we obtain
		\begin{align*}
			\int_{\mathbb{D}}|\psi(z) f'_n(\varphi(z))\varphi''(z)|^2dA_{\alpha+2}(z) \lesssim \epsilon^2.
		\end{align*}
		By condition $(iv),$ there exists $r_4 \in (0,1),$ such that 
		$$|\varphi'(z)\psi(z)|^2|(1-|z|^2)^{\alpha+2}<\epsilon^2 (1-|\varphi(z)|^2)^{\alpha+2} ~~\text{for all}~~z \in \mathbb{D}\setminus \overline{B(0,r_4)}.$$ 
		Hence, we obtain
		\begin{align*}
			&\int_{\mathbb{D}\setminus \overline{B(0,r_4)}}|\psi(z) f''_n(\varphi(z))\varphi'^2(z)|^2dA_{\alpha+2}(z)\\
			& \lesssim \epsilon^2 \int_{\mathbb{D}}|f''_n(\varphi(z))|^2|\varphi'(z)|^2(1-|\varphi(z)|^2)^{\alpha+2}dA(z).
		\end{align*}
		As $\varphi$ is univalent on $\mathbb{D},$ changing variables via $w=\varphi(z)$ yields
		\begin{align}\label{T1I41}
			&\int_{\mathbb{D}\setminus \overline{B(0,r_4)}}|\psi(z) f''_n(\varphi(z))\varphi'^2(z)|^2dA_{\alpha+2}(z)\nonumber\\
			&\lesssim \epsilon^2 \int_{\mathbb{D}}|f''_n(w)|^2(1-|w|^2)^{\alpha+2}dA(w)
			= \epsilon^2 \|f''_n\|^2_{A^2_{\alpha+2}}\lesssim \epsilon^2 \|f_n\|^2_{D_{\alpha}} \lesssim \epsilon^2.
		\end{align}
		Since $\varphi,\psi$ and $\varphi'$ are continuous on $\overline{B(0,r_4)},$ the set $\varphi(\overline{B(0,r_4)})$ is compact, and both $\psi$ and $\varphi'$ are bounded on $\overline{B(0,r_4)}.$ Applying Lemma~\ref{L2}, we can choose $N_4\in \mathbb{N}$ such that 
		\[|f''_n(\varphi(z))|<\epsilon~~\forall n \geq N_4,~~z \in \overline{B(0,r_4)}.\] 
		Consequently, for every $n \geq N_4,$ it follows that
		\begin{align}\label{T1I42}
			\int_{\overline{B(0,r_1)}}|\psi(z) f''_n(\varphi(z))\varphi'^2(z)|^2dA_{\alpha+2}(z) \lesssim \epsilon^2.
		\end{align}
		Combining \eqref{T1I41} and \eqref{T1I42}, for all $n \geq N_4,$ we get
		\begin{align*}
			\int_{\mathbb{D}}|\psi(z) f''_n(\varphi(z))\varphi'^2(z)|^2dA_{\alpha+2}(z) \lesssim \epsilon^2.
		\end{align*}
		Now, let 
		$$N=\max\{N_0,N_1,N_2,N_3,N_4\}.$$ 
		Finally, combining all the preceding relations, we obtain
		\begin{align*}
			\|C_{\psi,\varphi}f_n\|_{\mathcal{D}_{\alpha}} \lesssim \epsilon~~\forall n \geq N.
		\end{align*}
		This completes the proof.
	\end{proof}
	
	Theorem~\ref{TC1} offers a constructive method for generating compact weighted composition operators on $\mathcal D_{\alpha}$. As a specific example, if $\varphi$ is a LFT self-map of $\mathbb{D}$ satisfying $\|\varphi\|_{\infty}<1$ then it induces a compact composition operator $C_{\varphi}$ on $\mathcal D_{\alpha}$. This is consistent with the known result \cite[Th. 2]{Z_IUMJ_1990} characterizing compactness for $\alpha<0$. Furthermore, when $\psi$ is also an LFT self-map of $\mathbb D$, the weighted operator $C_{\psi,\varphi}$ is compact on $\mathcal D_{\alpha}$ for $\alpha \in (-1,1)$. This demonstrates that the applicability of Theorem~\ref{TC1} extends beyond the unweighted composition operators.
	
	In the following example, we exhibit a self-map $\varphi$ of $\mathbb D$ with $\|\varphi\|_{\infty}=1$ for which the weighted composition operator $C_{\psi,\varphi}$ is compact on $\mathcal D_{\alpha}$ for every $\alpha \in (-1,1),$ even though neither the composition operator $C_{\varphi}$ nor the multiplication operator $M_{\psi}$ is compact on $\mathcal D_{\alpha}$ for all $\alpha \in (0,1).$

	\begin{example}\label{EX1}
		Let 
		$\psi(z)=(1-z)^{2+\alpha}~~\text{and}~~\varphi(z)=\frac{\lambda_{\varphi}z}{1-(1-\lambda_{\varphi})z},$ where $\lambda_{\varphi}\in [\frac{1}{2},1).$
		Then $\varphi$ is a univalent holomorphic self-map of $\mathbb{D}$ with fixed points $0$ and it admits angular derivatives at every point of $\partial \mathbb D.$
		By \cite[Th. 5.3]{MS_CJM_1986}, the composition operator $C_{\varphi}$ is not compact on $\mathcal D_{\alpha}$ for any $\alpha \in (-1,1).$ Moreover, by Corollary~\ref{C0C1}, the multiplication operator $M_{\psi}$ is not compact on $\mathcal{D}_{\alpha}$ for every $\alpha \in (0,1).$ 
		
		A direct computation shows that $\psi$ and $\varphi$ satisfy conditions $(ii)$ and $(iii)$ of Theorem \ref{TC1}.
		Furthermore, we have
		\begin{align*}
			|\psi''(z)|\lesssim \begin{cases}
				1\,\,\,\,\text{if}~~\alpha \in [0,1)\\
				(1-|z|)^{\alpha}\,\,\,\,\text{if}~~\alpha \in (-1,0),
			\end{cases}
		\end{align*}
		which implies that condition $(i)$ of Theorem \ref{TC1} is satisfied for all
		$\alpha \in (-1,1).$
		Finally, for $\lambda_{\varphi}\in [\frac{1}{2},1),$ a straightforward computation yields
		\begin{align*}
			|\psi(z)|\left(\frac{1-|z|^2}{1-|\varphi(z)|^2}\right)^{\frac{\alpha}{2}+1} \lesssim (1-|z|^2)^{\frac{\alpha}{2}+1},
		\end{align*}
		which verifies condition $(iv)$ of Theorem \ref{TC1}.
		Consequently, $C_{\psi,\varphi }$ is compact on $\mathcal{D}_{\alpha}$ for every $\alpha \in (-1,1).$
	\end{example}

	This example points to an interesting observation that a non-compact composition operator on $\mathcal D_{\alpha}$ can be made compact by choosing an appropriate weight function. Furthermore, in Example \ref{EX1} the weight function vanishes on $\varphi(\partial \mathbb D)\cap \partial \mathbb D.$ This observation leads to the following question:
	\begin{question}
		Let $C_{\psi,\varphi}$ be bounded operator on $\mathcal D_{\alpha}.$
		Suppose that $\varphi$ is continuous on $\partial \mathbb D$ and $\psi$ is continuous and vanishes on $\varphi(\partial \mathbb D)\cap \partial \mathbb D.$ Is  $C_{\psi,\varphi}$ necessarily compact on $\mathcal D_{\alpha}?$
	\end{question}

	The following lemma will be used in proving the next theorem.
	
	
	\begin{lemma}\label{Englis_p3}
		If $f \in \text{Hol}(\mathbb{D})$ and 
		$\sup\limits_{z \in \mathbb{D}}|f'(z)|(1-|z|^2)<\infty,$
		then 
		$$\lim_{|z|\to 1^-}|f(z)|(1-|z|^2)^{\alpha}=0~~\text{for every $\alpha>0$}.$$
	\end{lemma}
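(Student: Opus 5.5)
We use the standard growth estimate for Bloch functions: if $M:=\sup_{z\in\mathbb D}|f'(z)|(1-|z|^2)<\infty$, then $f$ grows at most logarithmically. Multiplying this logarithmic bound by the factor $(1-|z|^2)^{\alpha}$ with $\alpha>0$ forces the product to $0$.

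\emph{Step 1: the logarithmic bound.} Fix $z\in\mathbb D$ with $z\neq 0$. Integrate $f'$ along the radial segment from $0$ to $z$, parametrized by $tz/|z|$ with $t\in[0,|z|]$. Since $1-t^2\ge 1-t$ for $t\in[0,1)$, we obtain
\begin{align*}
|f(z)-f(0)|\le \int_0^{|z|}\left|f'\!\left(\tfrac{tz}{|z|}\right)\right|dt\le M\int_0^{|z|}\frac{dt}{1-t^2}\le M\int_0^{|z|}\frac{dt}{1-t}=M\log\frac{1}{1-|z|}.
\end{align*}
Using $1-|z|^2\le 2(1-|z|)$, this gives
\begin{align*}
|f(z)|\le |f(0)|+M\log\frac{2}{1-|z|^2}\quad\text{for all } z\in\mathbb D .
\end{align*}

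\emph{Step 2: pass to the limit.} Set $x=1-|z|^2\in(0,1]$, so that $x\to 0^+$ exactly when $|z|\to 1^-$. By Step 1,
\begin{align*}
|f(z)|(1-|z|^2)^{\alpha}\le |f(0)|\,x^{\alpha}+M\,x^{\alpha}\log\frac{2}{x}.
\end{align*}
Both terms on the right tend to $0$ as $x\to 0^+$ for every $\alpha>0$; the second does so because $x^{\alpha}\log(1/x)\to 0$. This yields
\begin{align*}
\lim_{|z|\to 1^-}|f(z)|(1-|z|^2)^{\alpha}=0 .
\end{align*}

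The only step with real content is the logarithmic bound in Step 1. It follows directly from the hypothesis and needs no earlier result from the paper.
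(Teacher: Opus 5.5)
Your proof is correct and follows essentially the same route as the paper: integrate $f'$ along the segment $[0,z]$ to obtain the logarithmic growth bound $|f(z)|\lesssim 1+\log\frac{1}{1-|z|^2}$, then multiply by $(1-|z|^2)^{\alpha}$ and let $|z|\to 1^-$. The only difference is cosmetic: you bound $\int_0^{|z|}\frac{dt}{1-t^2}$ by $\int_0^{|z|}\frac{dt}{1-t}$ instead of estimating it directly, which changes nothing in the argument.
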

	
	\begin{proof}
		For $f \in \text{Hol}(\mathbb{D})$ and $z \in \mathbb{D},$ we get
		\begin{align*}
			|f(z)-f(0)|=\left|\int_{[0,z]}f'(\xi)d\xi \right|\leq {M} \int^{|z|}_{0}\frac{d\xi}{1-\xi^2}\leq M\left(\log2+ \frac{1}{2}\log\left(\frac{1}{1-|z|^2}\right) \right),
		\end{align*}
		where $M=\sup\limits_{z \in \mathbb{D}}(1-|z|^2)|f'(z)|.$
		Since, $M<\infty,$ it follows that for all $z \in \mathbb{D},$
		\begin{align}\label{eeeee}
			|f(z)|\lesssim 1+ \log\left(\frac{1}{1-|z|^2}\right).
		\end{align}
		Finally, multiplying both sides of \eqref{eeeee} by $(1-|z|^2)^{\alpha},$ and letting $|z| \to 1^-,$ we obtain the desired result.
	\end{proof}


	We now prove the characterization of bounded and compact weighted composition operators stated in the Introduction.
	
	\begin{proof}[Proof of Theorem \ref{mainp1}]
		One direction is immediate from Proposition~\ref{TB0}, so it remains to prove the converse.
		
		Since $\lim\limits_{|z|\to 1^-}|\psi''(z)|(1-|z|^2)=0$ and $\varphi''\in \text{H}^{\infty}(\mathbb{D}),$ Lemma~\ref{Englis_p3} implies that conditions $(i), (ii)$ and $(iii)$ of Theorem \ref{TC1} hold.
		It remains to show that 
		$$\lim_{|z|\to 1^-}|\psi(z)|\left(\frac{1-|z|^2}{1-|\varphi(z)|^2}\right)^{\frac{\alpha}{2}}=0$$ 
		implies 
		$$\lim_{|z|\to 1^-}|\psi(z)|\left(\frac{1-|z|^2}{1-|\varphi(z)|^2}\right)^{\frac{\alpha}{2}+1}=0.$$
		This follows from the Schwarz-Pick Theorem \cite[p. 48]{Cowen_BOOK}, which gives for any holomorphic self-map $\varphi$ of $\mathbb{D},$
		$\frac{1-|z|}{1-|\varphi(z)|}\leq \frac{1+|\varphi(0)|}{1-|\varphi(0)|}.$
		
		The boundedness case follows by an entirely parallel argument, replacing the limits as $|z|\to 1^-$ with supremum over $z \in \mathbb D$ and using Theorem \ref{TB1} instead of Theorem \ref{TC1}.
	\end{proof}
	

	

	In the next result, we extend the classical Comparison Theorem to weighted composition operators $C_{\psi,\varphi}$ for suitable inducing functions $\psi$ and $\varphi,$ providing a natural weighted analogue of the classical theorem. Recall that the Comparison Theorem \cite[Th. 5.2]{MS_CJM_1986} states that if $C_{\varphi}$ is bounded or compact on $\mathcal D_{\alpha}$ for some $\alpha \in (-1,1),$ then it has the same property on $\mathcal D_{\beta}$ for all $\beta \in (\alpha,1).$

	\begin{proposition}\label{mainp2}
		Let $\varphi$ be a univalent holomorphic self-map of $\mathbb{D}$ with $\varphi''\in \text{H}^{\infty}(\mathbb{D}),$ and let $\psi \in \text{Hol}(\mathbb D).$\\
		$(i)$~~If $\sup\limits_{z \in \mathbb{D}}|\psi''(z)|(1-|z|^2)<\infty$ and $C_{\psi,\varphi}$ is bounded on $\mathcal{D}_{\alpha}$ for some $\alpha \in (0,1)$ then $C_{\psi,\varphi}$ is bounded on $\mathcal{D}_{\beta}$ for every $\beta \in (\alpha,1).$\\
		$(ii)$ ~~If $\lim\limits_{|z|\to 1^-}|\psi''(z)|(1-|z|^2)=0$ and $C_{\psi,\varphi}$ is compact on $\mathcal{D}_{\alpha}$ for some $\alpha \in (0,1)$ then $C_{\psi,\varphi}$ is compact on $\mathcal{D}_{\beta}$ for every $\beta \in (\alpha,1).$
	\end{proposition}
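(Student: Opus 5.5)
The plan is to reduce everything to Theorem~\ref{mainp1}, which, under the standing hypotheses on $\varphi$ (univalent, $\varphi''\in \text{H}^{\infty}(\mathbb D)$), characterizes boundedness and compactness on $\mathcal D_{\gamma}$ for every $\gamma\in(0,1)$ purely through the quantity
\[
Q_{\gamma}(z)=|\psi(z)|\left(\frac{1-|z|^2}{1-|\varphi(z)|^2}\right)^{\frac{\gamma}{2}}.
\]
Since the assumption on $\psi''$ does not depend on the index, Theorem~\ref{mainp1} applies simultaneously at $\alpha$ and at $\beta$. It therefore suffices to show that boundedness of $Q_{\alpha}$ implies boundedness of $Q_{\beta}$, and that $Q_{\alpha}(z)\to 0$ implies $Q_{\beta}(z)\to 0$ as $|z|\to 1^-$.

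For part $(i)$, I would apply Theorem~\ref{mainp1}$(i)$, or directly Proposition~\ref{TB0}$(i)$, on $\mathcal D_{\alpha}$ to get $\sup_z Q_{\alpha}(z)<\infty$. Then I would write
\[
Q_{\beta}(z)=Q_{\alpha}(z)\left(\frac{1-|z|^2}{1-|\varphi(z)|^2}\right)^{\frac{\beta-\alpha}{2}}
\]
and bound the last factor uniformly using the Schwarz--Pick estimate already used in the proof of Theorem~\ref{mainp1}. That estimate gives
\[
\frac{1-|z|}{1-|\varphi(z)|}\leq \frac{1+|\varphi(0)|}{1-|\varphi(0)|},
\]
and since $1+|z|\le 2$ and $1+|\varphi(z)|\ge 1$, we get
\[
\frac{1-|z|^2}{1-|\varphi(z)|^2}\le 2\,\frac{1+|\varphi(0)|}{1-|\varphi(0)|}.
\]
Because $\beta-\alpha>0$, the extra factor is bounded by a constant. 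Hence $\sup_z Q_{\beta}(z)<\infty$, and Theorem~\ref{mainp1}$(i)$ with $\alpha$ replaced by $\beta\in(0,1)$ gives boundedness on $\mathcal D_{\beta}$.

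Part $(ii)$ follows the same route. Proposition~\ref{TB0}$(ii)$ on $\mathcal D_{\alpha}$ gives $Q_{\alpha}(z)\to0$. The same factorization, with the uniformly bounded factor, gives $Q_{\beta}(z)\to 0$, and Theorem~\ref{mainp1}$(ii)$ at index $\beta$ then yields compactness.

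I see no real obstacle. The only points to check are that $\beta$ still lies in $(0,1)$, which it does since $\beta>\alpha>0$, so that Theorem~\ref{mainp1} applies there, and that the exponent $\frac{\beta-\alpha}{2}$ is positive, so that the Schwarz--Pick upper bound, rather than a lower bound, is what is needed.
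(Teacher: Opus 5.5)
Your proof is correct, but it is organized differently from the paper's. The paper first treats the case $\varphi(0)=0$, where the Schwarz lemma gives $1-|z|^2\le 1-|\varphi(z)|^2$ and hence $Q_\beta\le Q_\alpha$ pointwise (in your notation $Q_\gamma$). It then handles a general $\varphi$ by passing to $\varphi_a\circ\varphi$ with $a=\varphi(0)$. There it uses the two-sided comparability \eqref{nextsp} together with Theorem~\ref{mainp1}, at both indices, to transfer boundedness and compactness between $C_{\psi,\varphi}$ and $C_{\psi,\varphi_a\circ\varphi}$. This also requires noting that $\varphi_a\circ\varphi$ is still univalent with bounded second derivative.

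You skip this reduction by using the one-sided Schwarz--Pick bound $\frac{1-|z|^2}{1-|\varphi(z)|^2}\le 2\,\frac{1+|\varphi(0)|}{1-|\varphi(0)|}$ directly. This is the same estimate the paper already invokes in proving Theorem~\ref{mainp1}, and it gives $Q_\beta\le C\,Q_\alpha$ with $C$ depending only on $|\varphi(0)|$ and $\beta-\alpha$. Since only the passage to the larger index $\beta$ is needed, an upper bound on the ratio is all that matters. The normalization $\varphi(0)=0$, which buys the constant $1$, is therefore unnecessary.

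Your route is shorter and self-contained. It also makes explicit that necessity at level $\alpha$ comes from Proposition~\ref{TB0} alone, with the hypothesis on $\psi''$ used only for sufficiency at level $\beta$. The paper's route, in exchange, records along the way that under these hypotheses boundedness and compactness of $C_{\psi,\varphi}$ are unchanged when $\varphi$ is post-composed with a disc automorphism.
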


	\begin{proof}
		The proof is divided into two steps: the case where $\varphi$ fixes the origin, followed by the general case.\\
		Step 1: Since $\varphi(0)=0,$ the Schwarz Lemma implies that  
		$$1-|z|^2\leq 1-|\varphi(z)|^2~~\text{for all $z \in \mathbb{D}$}$$ 
		and hence
		$$|\psi(z)|\left(\frac{1-|z|^2}{1-|\varphi(z)|^2}\right)^{\frac{\beta}{2}}\leq |\psi(z)|\left(\frac{1-|z|^2}{1-|\varphi(z)|^2}\right)^{\frac{\alpha}{2}}~~\text{for all $\beta \in (\alpha,1).$}$$
		The compactness condition follows by letting $|z| \to 1^-$ and applying Theorem~\ref{mainp1}~$(ii)$, while the boundedness condition follows by taking the supremum over $z \in \mathbb D$ and applying Theorem~\ref{mainp1}~$(i).$\\
		Step 2: Suppose that $\varphi$ is a holomorphic self-map of $\mathbb{D}$ that does not fix the origin. Then there exists $a \in \mathbb{D}\setminus \{0\}$ such that $\varphi(0)=a.$ The function $\varphi_a\circ \varphi$ then fixes the origin and is a univalent holomorphic self-map of $\mathbb{D}$ with $(\varphi_a\circ \varphi)''\in \text{H}^{\infty}(\mathbb{D}).$ A straightforward computation shows that
		\begin{align}\label{mainp2e1}
			\left(\frac{1-|z|^2}{1-|\varphi(z)|^2}\right)=|\varphi_a'(\varphi(z))|\left(\frac{1-|z|^2}{1-|\varphi_a(\varphi(z))|^2}\right)~~\forall z \in \mathbb{D}.
		\end{align}
		Since for all $z \in \mathbb{D},$
		$$\frac{1-|a|^2}{4} \leq |\varphi_a'(\varphi(z))|\leq \frac{1+|a|}{1-|a|}.$$
		Thus from the equality \eqref{mainp2e1} for any $\alpha \in (0,1),$ we obtain
		\begin{align}\label{nextsp}
			\left(\frac{1-|z|^2}{1-|\varphi(z)|^2}\right)^{\frac{\alpha}{2}} \cong 	\left(\frac{1-|z|^2}{1-|\varphi_a(\varphi(z))|^2}\right)^{\frac{\alpha}{2}}.
		\end{align}
		Now, letting $|z| \to 1^-$ on both sides of \eqref{nextsp} implies that
		\begin{align*}
			\lim_{|z|\to 1^-}|\psi(z)|\left(\frac{1-|z|^2}{1-|\varphi(z)|^2}\right)^{\frac{\alpha}{2}}=0~~\text{iff}~~\lim_{|z|\to 1^-}|\psi(z)|\left(\frac{1-|z|^2}{1-|\varphi_a(\varphi(z))|^2}\right)^{\frac{\alpha}{2}}=0.
		\end{align*}
		By Theorem \ref{mainp1}~$(ii)$, for each $\alpha \in (0,1),$ the operator $C_{\psi,\varphi}$ is compact on $\mathcal{D}_{\alpha}$ if and only if $C_{\psi,\varphi_a\circ \varphi}$ is compact on $\mathcal{D}_{\alpha}.$ Hence, the compactness of $C_{\psi,\varphi}$ on $\mathcal D_{\alpha}$ implies the compactness of $C_{\psi,\varphi_a\circ \varphi}.$
		By Step 1, $C_{\psi,\varphi_a\circ \varphi}$ is compact on $\mathcal{D}_{\beta}$ for every $\beta \in (\alpha,1).$ Consequently, $C_{\psi,\varphi}$ is compact on $\mathcal{D}_{\beta}$ for every $\beta \in (\alpha,1).$
		
		The boundedness follows from a parallel argument, replacing the limit as $|z|\to 1^-$ with the supremum over $z \in \mathbb D.$ 
	\end{proof}

	\section{Spectral properties}\label{S2}
	
	It is known by Schwarz-Pick theorem that any nontrivial holomorphic self-map of the unit disc has at most one fixed point in $\mathbb D.$ In this section, we investigate the spectrum of the weighted composition operators $C_{\psi, \varphi}$ under the additional assumption that $\varphi$ has a fixed point in $\mathbb D.$ 
	We begin by proving several preliminary results.
	
	In our first result, we assume that this fixed point is the origin and work with functions $\varphi$ of the form
	\begin{align}\label{al1}
		\varphi(z)=z\tau(z)~~\text{for all}~~ z \in \mathbb D,
	\end{align}
	where $\tau \in \text{Hol}(\mathbb D).$
	
	\begin{lemma}\label{SPL1}
		Let $\varphi$ be a holomorphic self-map of $\mathbb D$ in the form \eqref{al1}. Then $\varphi'' \in \text{H}^{\infty}(\mathbb D)$ if and only if $\tau'' \in \text{H}^{\infty}(\mathbb D).$
	\end{lemma}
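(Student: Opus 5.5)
Write $\varphi(z)=z\tau(z)$ and differentiate twice:
\[
\varphi''(z)=2\tau'(z)+z\tau''(z).
\]
Both directions will come from this identity, together with the fact that $\tau$ is bounded on $\mathbb D$. Boundedness of $\tau$ holds because $\varphi(0)=0$ and $|\varphi|<1$, so the Schwarz Lemma gives $|\tau(z)|=|\varphi(z)/z|\le 1$ for $z\neq 0$, and by continuity also at $0$.

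\emph{Sufficiency.} Assume $\tau''\in \text{H}^{\infty}(\mathbb D)$. Then $z\tau''$ is bounded. For $\tau'$, integrate along the segment $[0,z]$:
\[
\tau'(z)=\tau'(0)+\int_{[0,z]}\tau''(\xi)\,d\xi ,
\]
so $|\tau'(z)|\le|\tau'(0)|+\|\tau''\|_\infty$. The identity then gives $\varphi''\in\text{H}^{\infty}(\mathbb D)$.

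\emph{Necessity.} Assume $\varphi''\in\text{H}^{\infty}(\mathbb D)$. The same integration argument applied to $\varphi''$ shows that $\varphi'$ is bounded, and integrating once more shows $\varphi$ is bounded (which is already clear). I will express $\tau$ and its derivatives through $\varphi$ via Taylor's formula with integral remainder at $0$:
\[
\tau(z)=\int_0^1\varphi'(tz)\,dt .
\]
Differentiating under the integral sign twice gives
\[
\tau''(z)=\int_0^1 t^2\varphi'''(tz)\,dt ,
\]
which involves $\varphi'''$ and so is not directly controlled. Instead I will use the identity solved for $\tau''$: $z\tau''=\varphi''-2\tau'$. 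This requires a bound on $\tau'$, which follows from
\[
\tau'(z)=\int_0^1 t\,\varphi''(tz)\,dt ,
\]
giving $|\tau'|\le\|\varphi''\|_\infty/2$. Consequently $|z\tau''(z)|\le 2\|\varphi''\|_\infty$ on $\mathbb D$.

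It remains to remove the factor $z$. On $|z|\ge 1/2$ we get $|\tau''(z)|\le 4\|\varphi''\|_\infty$. On $|z|\le 1/2$, $\tau''$ is holomorphic and hence bounded on this compact set. Alternatively, the maximum modulus principle applied to $\tau''$ on the disc of radius $1/2$ gives the same bound. Thus $\tau''\in\text{H}^{\infty}(\mathbb D)$.

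The only step needing any care is controlling $\tau'$ in the necessity direction without passing through $\varphi'''$; the integral representation $\tau'(z)=\int_0^1 t\varphi''(tz)\,dt$, obtained from $\tau(z)=\int_0^1\varphi'(tz)\,dt$, is what handles it.
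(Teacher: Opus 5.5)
Your proof is correct and has the same skeleton as the paper's: both directions rest on $\varphi''=z\tau''+2\tau'$. In the necessity direction both remove the factor $z$ by dividing on $|z|\ge\delta$ and using compactness (or maximum modulus) on $|z|\le\delta$.

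The genuine difference is how you control $\tau'$. The paper bootstraps in three steps. It first bounds $\tau=\varphi/z$ by the divide-by-$z$ trick. It then gets $\tau'$ from $\varphi'=z\tau'+\tau$, which uses $\varphi'\in\text{H}^{\infty}(\mathbb D)$; this follows from $\varphi''\in\text{H}^{\infty}(\mathbb D)$ by integration, but the paper does not say so. Only after that does it reach $\tau''$. Your representation $\tau'(z)=\int_0^1 t\,\varphi''(tz)\,dt$ bounds $\tau'$ in one step by $\|\varphi''\|_\infty/2$, needing neither $\tau$ nor $\varphi'$. It also gives an explicit constant, $\|\tau''\|_\infty\le 4\|\varphi''\|_\infty$.

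In the sufficiency direction you also supply a step the paper skips: that $\tau''\in\text{H}^{\infty}(\mathbb D)$ forces $\tau'\in\text{H}^{\infty}(\mathbb D)$, which is needed before concluding from the identity. One small point: the Schwarz-lemma bound $|\tau|\le 1$ in your opening paragraph is never used in either direction and can be dropped.
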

	
	\begin{proof}
		For all $z \in \mathbb D,$ differentiating \eqref{al1} yields
		\begin{align}\label{al2}
			\varphi'(z)=z\tau'(z)+\tau(z)
		\end{align}
		and
		\begin{align}\label{al3}
			\varphi''(z)=z\tau''(z)+2\tau'(z).
		\end{align}
		Suppose that $\tau'' \in \text{H}^{\infty}(\mathbb D).$ Then it follows from \eqref{al3} that $\varphi'' \in \text{H}^{\infty}(\mathbb D).$ 
		
		Conversely, assume that $\varphi'' \in \text{H}^{\infty}(\mathbb D).$ Now, we fix $0<\delta<1.$ The continuity of $\tau$ on $\overline{B(0,\delta)}$ implies that $\tau$ is bounded on $\overline{B(0,\delta)}.$ From \eqref{al1}, we obtain
		$$|\tau(z)|=\frac{|\varphi(z)|}{|z|} \leq \frac{|\varphi(z)|}{\delta}\,\,\,\,\text{for all}\,\,\,\, z \in \mathbb D \setminus \overline{B(0,\delta)}.$$
		Since $\varphi \in \text{H}^{\infty}(\mathbb D),$ it follows that $\tau \in \text{H}^{\infty}(\mathbb D).$ In a similar way, \eqref{al2} together with $\tau, \varphi' \in \text{H}^{\infty}(\mathbb D)$ yields $\tau' \in \text{H}^{\infty}(\mathbb D)$. Similarly, \eqref{al3} along with $\tau', \varphi'' \in \text{H}^{\infty}(\mathbb D)$ implies $\tau'' \in \text{H}^{\infty}(\mathbb D)$.     
	\end{proof}
	
	To establish the next result, we recall the following. For $\alpha \in (-1,1)$, $\mathcal D_{\alpha}$ possesses an orthonormal basis $\mathcal{B} = \{e_n\}_{n=0}^\infty$, where $e_n(z) = (n+1)^{\frac{\alpha-1}{2}} z^n.$ Furthermore, each function $e_n$ is a multiplier on $\mathcal{D}_{\alpha}$, i.e., $e_n \mathcal{D}_{\alpha} \subseteq \mathcal{D}_{\alpha}.$ A comprehensive treatment of multipliers on $\mathcal{D}_{\alpha}$ spaces can be found in \cite{T_TAMS_1966}.

	\begin{lemma}\label{SPL2}
		Let $\varphi$ be a univalent holomorphic self-map of $\mathbb{D}$ satisfying 
		$\varphi'' \in \text{H}^{\infty}(\mathbb D)$ and $\varphi(0)=0.$ 
		For a given $\alpha \in (-1,1),$ let $\psi \in \text{Hol}(\mathbb{D})$ be such that:
		\begin{align*}
			&(i)~~\sup_{z \in \mathbb{D}}|\psi''(z)|(1-|z|^2)<\infty,\\
			&(ii)~~\sup_{z \in \mathbb{D}}|\psi(z)|\left(\frac{1-|z|^2}{1-|\varphi(z)|^2}\right)^{\frac{\alpha}{2}+1}<\infty.
		\end{align*}
		Then $C_{\psi,\varphi}(e_n\mathcal{D}_{\alpha}) \subseteq e_n\mathcal{D}_{\alpha}$ for all $n \in \mathbb N \cup \{0\}.$ 
	\end{lemma}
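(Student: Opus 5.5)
Using the factorization $\varphi=z\tau$ from Lemma~\ref{SPL1}, the plan is to prove the two-step statement that, for $g\in\mathcal D_{\alpha}$,
\[
C_{\psi,\varphi}(e_n g)=\psi\,(e_n\circ\varphi)\,(g\circ\varphi)=e_n\cdot\tau^n\,\psi\,(g\circ\varphi)=e_n\, C_{\psi\tau^n,\varphi}\,g ,
\]
since $e_n(\varphi(z))=(n+1)^{\frac{\alpha-1}{2}}z^n\tau(z)^n=e_n(z)\tau(z)^n$. So it suffices to show that $C_{\psi\tau^n,\varphi}$ maps $\mathcal D_{\alpha}$ into itself, i.e.\ that it is bounded. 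I will verify this through Theorem~\ref{TB1} with $\psi$ replaced by $\Psi:=\psi\tau^n$.

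First I collect the available bounds. Since $\varphi(0)=0$ and $\varphi''\in H^\infty$, Lemma~\ref{SPL1} gives $\tau''\in H^\infty$. Its proof also yields $\tau,\tau'\in H^\infty$, and integrating, $\varphi,\varphi',\varphi''\in H^\infty$. From hypothesis (i), integrating $\psi''$ along radii as in Lemma~\ref{Englis_p3}, I get $|\psi'(z)|\lesssim 1+\log\frac{1}{1-|z|^2}$, and $\psi'$ is of logarithmic growth. This gives $|\psi'(z)|(1-|z|^2)$ bounded and $\psi$ bounded, because $\log$ is integrable near $1$. Hypothesis (ii), combined with the Schwarz lemma $1-|z|^2\le 1-|\varphi(z)|^2$, gives $|\psi|$ bounded, which is consistent.

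Next I check conditions (i)--(iv) of Theorem~\ref{TB1} for $\Psi$. Expanding
\[
\Psi''=\psi''\tau^n+2n\psi'\tau^{n-1}\tau'+\psi\bigl(n\tau^{n-1}\tau''+n(n-1)\tau^{n-2}\tau'^2\bigr),
\]
each term times $(1-|z|^2)$ is bounded by the bounds above, which gives (i). For (ii), $\varphi'\Psi'=\varphi'(\psi'\tau^n+n\psi\tau^{n-1}\tau')$ is bounded after multiplying by $(1-|z|^2)$. For (iii), $\varphi''\Psi$ is already bounded. For (iv), $|\varphi'\Psi|\le\|\varphi'\|_\infty\|\tau\|_\infty^n|\psi|$, and hypothesis (ii) controls $|\psi|\bigl(\frac{1-|z|^2}{1-|\varphi(z)|^2}\bigr)^{\frac{\alpha}{2}+1}$. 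Theorem~\ref{TB1} then makes $C_{\Psi,\varphi}$ bounded, so $C_{\psi,\varphi}(e_n g)=e_n C_{\Psi,\varphi}g\in e_n\mathcal D_{\alpha}$.

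The main obstacle is the bookkeeping for $\tau,\tau'$ being bounded, and in particular for $\psi'$ times $(1-|z|^2)$ being bounded. The latter follows from the logarithmic growth obtained from hypothesis (i); I would write out that radial-integration estimate explicitly, mirroring the proof of Lemma~\ref{Englis_p3}. The case $n=0$ is simply the boundedness of $C_{\psi,\varphi}$ itself, which is the same argument with $\tau^0=1$.
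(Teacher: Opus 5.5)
Your proof is correct and follows the paper's route. Both arguments factor $\varphi=z\tau$, pull out $e_n$, and reduce to the estimates of Theorem~\ref{TB1} using $\tau,\tau',\tau''\in \text{H}^{\infty}(\mathbb D)$ from Lemma~\ref{SPL1}; the paper applies the Leibniz rule to $\tau^n\cdot(\psi\, f\circ\varphi)$, while you absorb $\tau^n$ into the weight $\Psi=\psi\tau^n$ and recheck (i)--(iv), which is the same bookkeeping.

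One side remark is wrong but not load-bearing: Schwarz gives $\frac{1-|z|^2}{1-|\varphi(z)|^2}\le 1$, so hypothesis (ii) cannot bound $|\psi|$. The boundedness of $\psi$ you actually use comes correctly from integrating the logarithmic bound on $\psi'$ that hypothesis (i) provides.
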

	
	\begin{proof}
		Since $\varphi(0)=0$, there exists $\tau \in \text{Hol}(\mathbb D)$ such that $\varphi(z)=z\tau(z),$ for all $z \in \mathbb D.$
		For any $f \in \mathcal{D}_{\alpha},$ we have
		\begin{align*}
			C_{\psi,\varphi}(e_nf)(z)=(n+1)^{\frac{\alpha-1}{2}}z^n(\tau(z))^n\psi(z)f(\varphi(z))=(n+1)^{\frac{\alpha-1}{2}}z^n\zeta(z)\psi(z)f(\varphi(z)),
		\end{align*}
		where $\zeta(z)=(\tau(z))^n.$ To prove the desired result, it suffices to show that $\zeta\psi f\circ\varphi \in \mathcal{D}_{\alpha},$ which is equivalent to $(\zeta\psi f\circ\varphi)'' \in A^2_{\alpha+2}.$ We compute
		\begin{align}\label{ll}
			(\zeta\psi f\circ\varphi)''=\zeta''(\psi f\circ\varphi)+2\zeta'(\psi f\circ\varphi)'+\zeta(\psi f\circ\varphi)''.
		\end{align}
		Proceeding as in the proof of Theorem~\ref{TB1}, we get $(\psi f\circ\varphi)'' \in A^2_{\alpha+2}.$ Consequently, $(\psi f\circ\varphi)' \in A^2_{\alpha}$ and $\psi f\circ\varphi \in \mathcal{D}_{\alpha}.$ Since $\mathcal{D}_{\alpha} \subseteq A^2_{\alpha} \subseteq A^2_{\alpha+2},$ it follows that 
		$$\psi f\circ\varphi, (\psi f\circ\varphi)', (\psi f\circ\varphi)'' \in A^2_{\alpha+2}.$$
		By Lemma~\ref{SPL1}, we have
		$\tau'' \in \text{H}^{\infty}(\mathbb D),$ which implies $\zeta, \zeta', \zeta'' \in \text{H}^{\infty}(\mathbb D).$ Therefore, from \eqref{ll} we conclude that $(\zeta\psi f\circ\varphi)'' \in A^2_{\alpha+2}.$ This completes the proof.
	\end{proof}
	
	This leads to the following result concerning the spectrum.
	
	\begin{proposition}\label{SPT1}
		Let $\varphi$ be a univalent holomorphic self-map of $\mathbb{D}$ with fixed point $a \in \mathbb D$ and satisfies $\varphi'' \in \text{H}^{\infty}(\mathbb D)$. 
		For a given $\alpha \in (-1,1),$ let $\psi \in \text{Hol}(\mathbb{D})$ be such that:
		\begin{align*}
			&(i)~~\sup_{z \in \mathbb{D}}|\psi''(z)|(1-|z|^2)<\infty,\\
			&(ii)~~\sup_{z \in \mathbb{D}}|\psi(z)|\left(\frac{1-|z|^2}{1-|\varphi(z)|^2}\right)^{\frac{\alpha}{2}+1}<\infty.
		\end{align*}
		Then $0<|\varphi'(a)|\leq 1$ and $\sigma(C_{\psi,\varphi};\mathcal{D}_{\alpha})\supseteq \left\{\psi(a)(\varphi'(a))^n: n \in \mathbb N \cup \{0\} \right\}.$
	\end{proposition}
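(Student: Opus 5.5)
Throughout, $C_{\psi,\varphi}$ is bounded on $\mathcal D_\alpha$ by Theorem~\ref{TB1}. Indeed, conditions $(ii)$ and $(iii)$ there follow from $(i)$, $\varphi''\in\text{H}^\infty$, Lemma~\ref{Englis_p3} and $\varphi'\in\text{H}^\infty$. Condition $(iv)$ follows from $(ii)$ because $\varphi'$ is bounded.

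The inequality $0<|\varphi'(a)|\le 1$ is immediate. Univalence gives $\varphi'(a)\neq 0$, and the Schwarz--Pick theorem at the fixed point gives $|\varphi'(a)|\le 1$.

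\emph{Reduction to $a=0$.} Conjugate by the automorphism $\varphi_a$, which is an involution. Since $\varphi_a$ is an LFT, $C_{\varphi_a}$ is bounded and invertible on $\mathcal D_\alpha$ by Theorem~\ref{TB1}, with $C_{\varphi_a}^{-1}=C_{\varphi_a}$. A direct computation gives
$$C_{\varphi_a}C_{\psi,\varphi}C_{\varphi_a}=C_{\psi\circ\varphi_a,\ \varphi_a\circ\varphi\circ\varphi_a}.$$
Set $\tilde\varphi=\varphi_a\circ\varphi\circ\varphi_a$ and $\tilde\psi=\psi\circ\varphi_a$. Then $\tilde\varphi(0)=0$, $\tilde\varphi'(0)=\varphi'(a)$ and $\tilde\psi(0)=\psi(a)$, and the spectrum is unchanged under this similarity. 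We still need to check that $\tilde\varphi,\tilde\psi$ satisfy the hypotheses.
\begin{itemize}
\item Univalence of $\tilde\varphi$ is clear.
\item $\tilde\varphi''\in \text{H}^\infty$ follows from the chain rule, since $\varphi_a$ and its derivatives are bounded on $\overline{\mathbb D}$.
\item Condition $(i)$ for $\tilde\psi$ holds because $(1-|z|^2)\cong 1-|\varphi_a(z)|^2$ up to constants depending on $a$.
\item Condition $(ii)$ transfers by the same comparison, as in \eqref{mainp2e1}.
\end{itemize}
So we may assume $a=0$. Write $\lambda=\varphi'(0)$ and $\psi(0)=\psi_0$.

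\emph{Eigenvalues of the adjoint via the invariant flag.} By Lemma~\ref{SPL2}, each closed subspace $M_n=e_n\mathcal D_\alpha=z^n\mathcal D_\alpha$ is invariant under $T=C_{\psi,\varphi}$. The subspace $M_n$ consists of the functions in $\mathcal D_\alpha$ vanishing to order $n$ at $0$, so it is closed with $\dim M_n^\perp=n$. For $f\in M_n$, write $f=z^ng$. Then
$$Tf=\psi\,\varphi^n\,g\circ\varphi=\psi_0\lambda^n g(0)z^n+O(z^{n+1}).$$
Hence the induced operator on the one-dimensional quotient $M_n/M_{n+1}$ is multiplication by $\psi_0\lambda^n$.

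Now fix $n$ and let $P$ be the compression of $T$ to $M_{n+1}^\perp$, which is $T^*$-invariant. In the basis $1,z,\dots,z^n$ (modulo $M_{n+1}$), this compression is upper triangular with diagonal entries $\psi_0\lambda^k$, $k=0,\dots,n$. Its spectrum is therefore $\{\psi_0\lambda^k\}_{k\le n}$, and this set is contained in $\sigma(T)$. To see the containment explicitly, take $v\in M_{n+1}^\perp$ with $v\ne 0$ and $T^*v=\overline{\psi_0\lambda^n}\,v$. Such a $v$ exists because $T^*$ restricted to the finite-dimensional space $M_{n+1}^\perp$ has the conjugates of these diagonal entries as eigenvalues. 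Then $\psi_0\lambda^n\in\sigma(T)$.

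\emph{Main obstacle.} The delicate point is the bookkeeping in the last step. One must verify that $T^*$ maps $M_{n+1}^\perp$ into itself, which follows from $T(M_{n+1})\subseteq M_{n+1}$. One must also verify that the matrix of $T^*$ on $M_{n+1}^\perp$ is triangular with the claimed diagonal, which follows from the nested invariant chain $M_{k}$ together with the quotient computation above. The reduction to $a=0$, in particular transferring condition $(ii)$ through the conjugation, is routine but must be checked.
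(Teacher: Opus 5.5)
Your proposal is correct and follows the paper's proof essentially step for step. You reduce to $a=0$ via the similarity $C_{\varphi_a}C_{\psi,\varphi}C_{\varphi_a}=C_{\psi\circ\varphi_a,\varphi_a\circ\varphi\circ\varphi_a}$, and then use the $C_{\psi,\varphi}$-invariance of $z^{n}\mathcal D_{\alpha}$ (Lemma~\ref{SPL2}) to get a triangular matrix for $C_{\psi,\varphi}^*$ on $\text{span}\{e_0,\dots,e_n\}$ with diagonal entries $\overline{\psi(0)\varphi'(0)^k}$. The one detail you gloss over is that transferring condition $(i)$ to $\psi\circ\varphi_a$ also produces the term $\psi'(\varphi_a)\varphi_a''$, which needs $\sup_{z}|\psi'(z)|(1-|z|^2)<\infty$ from Lemma~\ref{Englis_p3}, exactly as the paper does.
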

	\begin{proof}
		Assume $\psi$ and $\varphi$ satisfy the hypotheses of the theorem. The univalence of $\varphi$ implies $|\varphi'(a)|>0$, while the Schwarz-Pick Theorem yields $|\varphi'(a)|\leq 1$.\\
		We prove this result by considering two steps. In Step 1, we establish the result for $\varphi$ fixing the origin, and in Step 2, we prove the general case. \\
		Step 1: Let $n \in \mathbb N,$ and let $M_n$ be the subspace of $\mathcal{D}_{\alpha}$ spanned by $\{e_0,e_1, \ldots, e_{n-1}\}.$ Then the weighted Dirichlet space $\mathcal{D}_{\alpha}$ can be decomposed as 
		$$\mathcal{D}_{\alpha}=M_n\oplus e_n\mathcal{D}_{\alpha}.$$
		By Lemma \ref{SPL2}, $C_{\psi,\varphi}(e_n\mathcal{D}_{\alpha}) \subseteq e_n\mathcal{D}_{\alpha},$ and hence $C^*_{\psi,\varphi}(M_n)\subseteq M_n.$ Consequently, the matrix representation of the compression $C^*_{\psi,\varphi}|_{M_n}$ with respect to the orthonormal basis $\mathcal B_n=\{e_0,e_1, \ldots, e_{n-1}\}$ is upper triangular. The point spectrum of $C^*_{\psi,\varphi}|_{M_n}$ consists of the diagonal entries $\langle C^*_{\psi,\varphi}e_k, e_k\rangle_{\mathcal{D}_{\alpha}}$ for $k=0,1,\ldots, n-1,$ and the corresponding eigenvectors lie in $M_n.$
		We now compute the diagonal entries of the matrix. For $k=0,$
		$\langle C^*_{\psi,\varphi}e_0, e_0\rangle_{\mathcal{D}_{\alpha}}=\overline{\psi(0)}.$
		For $k \in \mathbb N,$ we have
		\begin{align}\label{SPT1E1}
			\langle C^*_{\psi,\varphi}e_k, e_k\rangle_{\mathcal{D}_{\alpha}}=\langle e_k, C_{\psi,\varphi}e_k\rangle_{\mathcal{D}_{\alpha}}=(n+1)^{\frac{1-\alpha}{2}}\langle e_k, \psi \varphi^k\rangle_{\mathcal{D}_{\alpha}}.
		\end{align}
		Since $\varphi(0)=0,$ the power series expansion of $\varphi$ is
		$$\varphi(z)=\varphi'(0)z+\text{higher order terms in $z$}.$$
		Hence,
		$$\psi(z)(\varphi(z))^k=\psi(0)(\varphi'(0))^kz^k+\text{higher order terms in $z$}.$$
		Therefore, from \eqref{SPT1E1}, we obtain
		$$\langle C^*_{\psi,\varphi}e_k, e_k\rangle_{\mathcal{D}_{\alpha}}=\overline{\psi(0)} (\overline{\varphi'(0)})^k.$$
		Since the eigenvalues of a compression of an operator are contained in the eigenvalues of the original operator, it follows that
		$$\sigma(C^*_{\psi,\varphi};\mathcal{D}_{\alpha})\supseteq \left\{\overline{\psi(0)} (\overline{\varphi'(0)})^n: n \in \mathbb N \cup \{0\} \right\}.$$
		By the spectral mapping theorem, the spectrum of an operator and its adjoint are related as
		$$\sigma(C^*_{\psi,\varphi};\mathcal{D}_{\alpha})=\left\{\overline{\lambda} : \lambda \in \sigma(C_{\psi,\varphi};\mathcal{D}_{\alpha}) \right\}.$$
		Therefore, we obtain 
		$$\sigma(C_{\psi,\varphi};\mathcal{D}_{\alpha})\supseteq \left\{\psi(0)(\varphi'(0))^n: n \in \mathbb N \cup \{0\} \right\}.$$\\
		Step 2: We now consider the general case. Let $\zeta=\psi \circ \varphi_a$ and $\eta=\varphi_a\circ \varphi \circ \varphi_a.$ Then $\eta$ is a univalent holomorphic self-map of $\mathbb D$ with $\eta'' \in \text{H}^{\infty}(\mathbb D)$ and $\eta(0)=0.$\\
		It follows that
		\begin{align*}
			\sup_{z \in \mathbb D}|\zeta''(z)|(1-|z|^2)
			\lesssim & \sup_{z \in \mathbb D}\left\{ \left(|\psi''(\varphi_a(z))|+|\psi'(\varphi_a(z))|\right)(1-|z|^2) \right\}\\
			=&\sup_{z \in \mathbb D}\left\{ \left(|\psi''(z)|+|\psi'(z)|\right)(1-|\varphi_a(z)|^2) \right\}\\
			=&\sup_{z \in \mathbb D}\left\{ \left(|\psi''(z)|+|\psi'(z)|\right)|\varphi'_a(z)|(1-|z|^2) \right\}\\
			\cong & \sup_{z \in \mathbb D}\left\{ \left(|\psi''(z)|+|\psi'(z)|\right)(1-|z|^2) \right\}.
		\end{align*}
		Since $\sup\limits_{z \in \mathbb{D}}|\psi''(z)|(1-|z|^2)<\infty,$ and by Lemma \ref{Englis_p3},
		$\sup\limits_{z \in \mathbb D} |\psi'(z)|(1-|z|^2)<\infty,$ it follows that $\sup\limits_{z \in \mathbb D}|\zeta''(z)|(1-|z|^2)<\infty.$\\
		Additionally,
		\begin{align*}
			\sup_{z \in \mathbb{D}}|\zeta(z)|\left(\frac{1-|z|^2}{1-|\eta(z)|^2}\right)^{\frac{\alpha}{2}+1}&=\sup_{z \in \mathbb{D}}|\psi(z)|\left(\frac{1-|\varphi_a(z)|^2}{1-|\varphi_a(\varphi(z))|^2}\right)^{\frac{\alpha}{2}+1}\\
			& \cong \sup_{z \in \mathbb{D}}|\psi(z)|\left(\frac{1-|z|^2}{1-|\varphi_a(\varphi(z))|^2}\right)^{\frac{\alpha}{2}+1}.
		\end{align*}
		Thus, by \eqref{nextsp}, we have 
		\begin{align*}
			\sup_{z \in \mathbb{D}}|\zeta(z)|\left(\frac{1-|z|^2}{1-|\eta(z)|^2}\right)^{\frac{\alpha}{2}+1} \cong \sup_{z \in \mathbb{D}}|\psi(z)|\left(\frac{1-|z|^2}{1-|\varphi(z)|^2}\right)^{\frac{\alpha}{2}+1}<\infty.
		\end{align*}
		Therefore, by Step 1, we obtain
		$$\sigma(C_{\zeta,\eta};\mathcal{D}_{\alpha})
		\supseteq \left\{\zeta(0)(\eta'(0))^n: n \in \mathbb N \cup \{0\} \right\}=\left\{\psi(a)(\varphi'(a))^n: n \in \mathbb N \cup \{0\} \right\}.$$
		The weighted composition operators $C_{\zeta,\eta}$ and $C_{\psi,\varphi}$ are similar, since
		$$C_{\zeta,\eta}=C^{-1}_{\varphi_a}C_{\psi,\varphi}C_{\varphi_a}.$$
		Hence, $C_{\zeta,\eta}$ and $C_{\psi,\varphi}$ have the same spectra, and we obtain
		\begin{align*}
			\sigma(C_{\psi,\varphi};\mathcal{D}_{\alpha})=\sigma(C_{\zeta,\eta};\mathcal{D}_{\alpha})\supseteq
			\left\{\psi(a)(\varphi'(a))^n: n \in \mathbb N \cup \{0\} \right\},
		\end{align*}
		as desired.
	\end{proof}

	The eigenfunctions of $C_{\varphi}$ are the solutions to the Schr\"{o}der equation (see \cite{S_MA_1871}):
	\begin{align}\label{SE}
		f \circ \varphi = \lambda f.
	\end{align}
	In \cite{K_ASS_1884}, the author studied this equation for holomorphic self-maps $\varphi$ of $\mathbb{D}$ that have a fixed point in $\mathbb{D}$, and found the values of $\lambda$ for which a solution exists.
	
	In the following lemma, we perform an analogous analysis for the weighted equation
	\begin{align}\label{WSE}
		\psi f \circ \varphi = \lambda f.
	\end{align}
	This can be seen as the weighted version of \eqref{SE}.

	\begin{lemma}\label{SPMSL1}
		Let $\varphi$ be a holomorphic self-map of $\mathbb D$ with fixed point $a \in \mathbb D$ and $\psi \in \text{Hol}(\mathbb D).$ If there exists $f \in \text{Hol}(\mathbb D)$ that satisfies the equation \eqref{WSE}.
		Then either there exits $n \in \mathbb N \cup \{0\}$ such that $\lambda=\psi(a)(\varphi'(a))^n$ or $f \equiv 0.$
	\end{lemma}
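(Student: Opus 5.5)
Here is the plan. I will look at the order of vanishing of $f$ at the fixed point $a$, after first moving $a$ to the origin by conjugating with the automorphism $\varphi_a$.

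Set $\eta=\varphi_a\circ\varphi\circ\varphi_a$, $\zeta=\psi\circ\varphi_a$ and $g=f\circ\varphi_a$. Since $\varphi_a$ is an involution, substituting $\varphi_a(z)$ into \eqref{WSE} gives $\zeta(z)\,f(\varphi(\varphi_a(z)))=\lambda g(z)$. Also $f\circ\varphi\circ\varphi_a=g\circ\varphi_a\circ\varphi\circ\varphi_a=g\circ\eta$. So $g$ satisfies $\zeta\, g\circ\eta=\lambda g$, with $\eta(0)=0$ and $\zeta(0)=\psi(a)$. By the chain rule and $\varphi_a'(0)\varphi_a'(a)=1$, we get $\eta'(0)=\varphi_a'(a)\varphi'(a)\varphi_a'(0)=\varphi'(a)$. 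Moreover $f\equiv0$ if and only if $g\equiv 0$. Hence it suffices to treat the case $a=0$, and I drop the tildes.

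Now suppose $a=0$ and $f\not\equiv 0$. Write $f(z)=c_mz^m+O(z^{m+1})$ with $c_m\neq0$, where $m\ge0$ is the order of the zero of $f$ at $0$. Also write $\varphi(z)=\varphi'(0)z+O(z^2)$. Then
\[
f(\varphi(z))=c_m(\varphi'(0))^mz^m+O(z^{m+1}),
\]
which holds even when $\varphi'(0)=0$, since then $f\circ\varphi$ vanishes to order greater than $m$. Multiplying by $\psi(z)=\psi(0)+O(z)$ gives
\[
\psi(z)f(\varphi(z))=\psi(0)c_m(\varphi'(0))^mz^m+O(z^{m+1}).
\]
Comparing the coefficient of $z^m$ with that of $\lambda f$ yields $\lambda c_m=\psi(0)(\varphi'(0))^mc_m$. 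Since $c_m\neq0$, this gives $\lambda=\psi(0)(\varphi'(0))^m$, which after undoing the conjugation is $\psi(a)(\varphi'(a))^m$ with $m\in\mathbb N\cup\{0\}$.

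The only subtle point is the bookkeeping in the conjugation step: checking that $\eta'(0)=\varphi'(a)$ and $\zeta(0)=\psi(a)$, and that the map $f\mapsto g$ preserves non-triviality. Two degenerate cases need a brief remark. If $\lambda=0$ and $f\not\equiv 0$, the same coefficient identity forces $\psi(0)(\varphi'(0))^m=0$, so the conclusion holds with that $m$. If $f\equiv0$, the second alternative holds trivially. The argument needs no boundedness, univalence or growth hypothesis; it is purely a local power-series comparison at the fixed point.
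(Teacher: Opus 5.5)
Your proof is correct and is essentially the paper's argument: the paper differentiates $\psi\, f\circ\varphi=\lambda f$ repeatedly at $a$ and shows by induction that if $\lambda\neq\psi(a)(\varphi'(a))^k$ for every $k$, then every $f^{(k)}(a)$ vanishes, which is the contrapositive of your comparison of the lowest-order Taylor coefficient of $f$ at the fixed point. Your conjugation by $\varphi_a$ is correct but unnecessary, since expanding directly in powers of $z-a$ works just as well.
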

	
	\begin{proof}
		Suppose that $f \in \text{Hol}(\mathbb D)$ satisfies the equation \eqref{WSE}.
		Since $\varphi(a)=a$ for $a \in \mathbb D,$ it follows from \eqref{WSE} that 
		$(\psi(a)-\lambda) f(a)=0.$ Hence, either $\lambda=\varphi(a)$ or $f(a)=0.$ 
		If $\lambda \neq \varphi(a)$ then we have $f(a)=0.$ Differentiating both sides of \eqref{WSE} gives
		\begin{align*}
			\psi' (f\circ \varphi)+\psi (f'\circ \varphi)\varphi'=\lambda f'.
		\end{align*}
		Evaluating at $a$ yields that $(\psi(a)\varphi'(a)-\lambda) f'(a)=0,$ which implies either $\lambda=\psi(a)\varphi'(a)$ or $f(a)=f'(a)=0.$ 
		Now, differentiating both sides of \eqref{WSE} $k$-times, we get
		\begin{align*}
			\psi^{(k)} (f\circ \varphi)+\psi (f^{(k)}\circ \varphi)(\varphi')^k+\text{ terms containing $(f^{(m)}\circ \varphi)$ for $m < k$ }=\lambda f^{(k)}.
		\end{align*}
		If $f(a)=f^{(m)}(a)=0$ for $m < k,$ then we obtain $(\psi(a)(\varphi'(a))^k-\lambda) f^{(k)}(a)=0.$ This implies that either $\lambda=\psi(a)(\varphi'(a))^k$ or $f^{(k)}(a)=0.$ Consequently, if $\lambda\neq \psi(a)(\varphi'(a))^n$ for every $n \in \mathbb N \cup \{0\}$ then $f(a)=f^{(k)}(a)=0$ for all $k \in \mathbb N.$ Hence, $f \equiv 0,$ as desired.
		
	\end{proof}

	We now prove the main result of this section.
	
	\begin{proof}[Proof of Theorem \ref{SPMT1}]
		Suppose that $\psi$ and $\varphi$ satisfy the hypotheses of the theorem. By Proposition \ref{SPT1}, we have $0<|\varphi'(a)|\leq 1.$ If $|\varphi'(a)|= 1,$ the Schwarz Lemma implies $\varphi$ is an automorphism of $\mathbb D$. In that case, Corollary \ref{C0C1} gives $\psi \equiv 0$. Therefore, $0<|\varphi'(a)|< 1.$
		
		By the Schwarz-Pick Theorem, $\varphi$ has at most one fixed point in $\mathbb D$, otherwise, $\varphi$ is the identity map. In the latter case, our assumption $(ii)$ yields $\lim\limits_{|z|\to 1^-}|\psi(z)|=0,$ and by the Maximum Modulus Theorem this implies $\psi\equiv 0.$
		Now, suppose $\lambda$ is a non-zero element of $\sigma(C_{\psi,\varphi};\mathcal{D}_{\alpha})$ not of the form $\psi(a)(\varphi'(a))^n$. Since $a$ is the unique fixed point of $\varphi$ in $\mathbb D$, Lemma \ref{SPMSL1} implies that any corresponding eigenfunction $f$ must vanish identically. Therefore, $\lambda$ cannot be an eigenvalue.
		By Theorem \ref{TC1}, $C_{\psi,\varphi}$ is compact on $\mathcal{D}_{\alpha}$, and since every non-zero spectral value of a compact operator is an eigenvalue, we conclude via Proposition \ref{SPT1} that no such $\lambda$ exists. This completes the proof of the spectral characterization.     
	\end{proof}

	To illustrate this, we apply our main result to determine the spectrum for a number of specific weighted composition operators.

	\begin{example}\label{EXX}
		$(i)$~~ Consider $\psi(z)=(1-z)^{2+\alpha}$ and $\varphi(z)=\frac{\lambda_{\varphi}z}{1-(1-\lambda_{\varphi})z}$ with $\lambda_{\varphi}\in [\frac{1}{2},1).$
		Example \ref{EX1} shows that $\psi$ and $\varphi$ meet the assumptions of Theorem \ref{SPMT1}. Therefore, the spectrum of $C_{\psi,\varphi}$ on $\mathcal D_{\alpha}$ is given by
		$$\sigma(C_{\psi,\varphi};\mathcal{D}_{\alpha})=\left\{\lambda_{\varphi}^n: n \in \mathbb N \cup \{0\} \right\}\cup \{0\}.$$


		\noindent 
		$(ii)$ For $r \in (0,1)$ and $k>1,$ let 
		$$\varphi_{r,k}(z)=exp\left(\frac{z(rk-1)+(r-k)}{1-rz}\right)~~\text{and}~~\psi(z)= z^2.$$ 
		By the Denjoy-Wolff Theorem (see \cite{Cowen_BOOK}), $\varphi_{r,k}$ has a fixed point in $\mathbb D,$ say $a_{r,k}$.
		The function $\varphi_{r,k}$ is a univalent holomorphic self-map on $\mathbb D$ and satisfies $\varphi_{r,k}'' \in \text{H}^{\infty}(\mathbb D).$ A straightforward computation shows that
		$$\left(\frac{1-|z|^2}{1-|\varphi_{r,k}(z)|^2}\right)^{\frac{\alpha}{2}+1} \lesssim (1-|z|^2)^{\frac{\alpha}{2}+1}.$$
		Consequently, for each $r \in (0,1)$ and $k>1,$ the inducing functions $\varphi_{r,k}$ and $\psi$ satisfy the hypotheses of Theorem \ref{SPMT1}. 
		Therefore, the spectrum of $C_{\psi,\varphi_{r,k}}$ on $\mathcal D_{\alpha}$ is given by
		$$\sigma(C_{\psi,\varphi};\mathcal{D}_{\alpha})=\left\{a^2_{r,k}(\varphi'_{r,k}(a_{r,k}))^n: n \in \mathbb N \cup \{0\} \right\}\cup \{0\}.$$
	\end{example}
	
	We conclude by posing the following question, which concerns a possible generalization of Theorem \ref{SPMT1}.
	
	\begin{question}
		Does there exist a non-zero compact weighted composition operator $C_{\psi,\varphi}$ on $\mathcal{D}_{\alpha}$ for which $\varphi$ has no fixed point in $\mathbb{D}$? If so, can the spectrum of $C_{\psi,\varphi}$ be characterized in this case?
	\end{question}


	\section*{Declarations}	
	\textit{Acknowledgements.} This research is supported by Czech Science Foundation (GA CR) grant no. 25-18042S. The author is grateful to Dr. Petr Blaschke for his careful reading of the manuscript and his valuable suggestions for improving the article. \\
	\textit{Data Availability :} No data was used for the research described in this article.\\
	\textit{Conflict of interest:} The author declares no conflict of interest.\\

	\bibliographystyle{amsplain}

\begin{thebibliography}{99}
		
		
		\bibitem{AL_IJM_2004} R. Aron and M. Lindstr\"{o}m, Spectra of weighted composition operators on weighted Banach spaces of analytic functions, Israel J. Math. 141 (2004), 263--276.
		
		
		\bibitem{CS_PAMS_1975} J. G. Caughran and H. J. Schwartz, Spectra of compact composition operators, Proc. Amer. Math. Soc. 51 (1975), 127--130.
		
		
		\bibitem{CGP_MA_2015} I. Chalendar, E. A. Gallardo-Gutiérrez and J. R. Partington, Weighted composition operators on the Dirichlet space: boundedness and spectral properties, Math. Ann. 363 (2015), no. 3-4, 1265--1279.
		
		
		
		\bibitem{Cowen_BOOK} C. C. Cowen and  B. D. MacCluer, Composition operators on spaces of analytic functions, Stud. Adv. Math. CRC Press, Boca Raton, FL, (1995).
		
		
		
		
		\bibitem{CR_JLMS_2004} \v{Z}.\ \v{C}u\v{c}kovi\'{c} and R. Zhao, Weighted composition operators on the Bergman space, J. London Math. Soc. (2) 70 (2004), no. 2, 499--511.
		
		
		
		
		
		\bibitem{GS_JFA_2016} E. A. Gallardo-Gutiérrez and R. Schroderus, The spectra of linear fractional composition operators on weighted Dirichlet spaces, J. Funct. Anal. 271 (2016), no. 3, 720--745.
		
		
		\bibitem{G_PAMS_2008} G. Gunatillake, Compact weighted composition operators on the Hardy space, Proc. Amer. Math. Soc. 135 (2007), no. 2, 461--467.
		
		
		
		\bibitem{H_ADM_1997} P. R. Hurst, Relating composition operators on different weighted Hardy spaces, Arch. Math. (Basel) 68 (1997), no. 6, 503--513.
		
		\bibitem{K_PJM_1979} H. Kamowitz, Compact operators of the form $uC_{\varphi},$ Pacific J. Math. 80 (1979), no. 1, 205--211.
		
		\bibitem{K_ASS_1884} G. K{\"o}nigs, Recherches sur les int{\'e}grales de certaines {\'e}quations fonctionnelles, Ann. Sci. {\'E}c. Norm. Sup{\'e}r. 1 (1884), no. 3, 3--41.
		
		\bibitem{Kumar_2009} S. Kumar, Weighted composition operators between spaces of Dirichlet type, Rev. Mat. Complut. 22 (2009), 469--488.
		
		\bibitem{LL_JAMS_2022} C. -O. Lo  and  A. W. -K. Loh, Compact and Hilbert-Schmidt weighted composition operators on weighted Bergman spaces, J. Aust. Math. Soc. 113 (2022), no. 2, 208--225.
		
		
		\bibitem{MS_CJM_1986} B. D. MacCluer and J. H. Shapiro, Angular derivatives and compact composition operators on
		the Hardy and Bergman spaces, Canad. J. Math. 38 (1986), no. 4, 878--906.
		
		
		
		\bibitem{RUDIN_BOOK} W. Rudin, Real and complex analysis, McGraw-Hill Book Co., New York, (1987).
		
		
		\bibitem{S_MA_1871} E. Schr\"{o}der, \"{U}ber iterierte Funktionen, Math. Ann. 3 (1871), 296--322.
		
		\bibitem{Shaprio_BOOK} J. H. Shaprio, Composition operators and classical function theory, Universitext Tracts Math., Springer-Verlag, New York, (1993).
		
		
		
		
		\bibitem{T_TAMS_1966} G. D. Taylor, Multipliers on $D_{\alpha}$, Trans. Amer. Math. Soc. 123 (1966), 229--240.
		
		\bibitem{T_TAMS_2003} M. Tjani, Compact composition operators on Besov spaces,
		Trans. Amer. Math. Soc. 355 (2003), no. 11, 4683--4698.
		
		
		
		
		\bibitem{ZHU_BOOK}  K. Zhu, Operator theory in function spaces, Math. Surveys Monogr., 138
		American Mathematical Society, Providence, RI, (2007).
		
		
		
		\bibitem{Z_PAMS_1998}  N. Zorboska, Composition operators on weighted Dirichlet spaces, Proc. Amer. Math. Soc. 126 (1998), no. 7, 2013--2023.
		
		\bibitem{Z_IUMJ_1990} N. Zorboska, Composition operators on $S_a$ spaces, Indiana Univ. Math. J. 39 (1990), no. 3, 847--857.
		
		
		
		
		
		
		
		
		
		
		
		
		
		
		
		
		
		
		
		
		
		
	\end{thebibliography}

\end{document}